\DeclareMathOperator{\E}{\mathbb{E}}
\DeclareMathOperator{\Pb}{\mathbb{P}}
\DeclareMathOperator{\N}{\mathbb{N}}
\DeclareMathOperator{\R}{\mathbb{R}}
\renewcommand{\a}{\alpha}
\renewcommand{\b}{\beta}
\renewcommand{\d}{\delta}
\renewcommand{\l}{\lambda}
\newcommand{\m}{\mu}
\newcommand{\e}{\epsilon}
\newcommand{\f}[2]{\frac{#1}{#2}}
\newcommand{\Dt}{\Delta t}
\renewcommand{\L}{\Lambda}
\newtheorem{theorem}{Theorem}[section]
\newtheorem{lemma}[theorem]{Lemma}
\newtheorem{proposition}[theorem]{Proposition}
\newtheorem{corollary}[theorem]{Corollary}
\newtheorem{conjecture}{\noindent \bf{Conjecture}}[section]
\theoremstyle{definition}
\newtheorem{definition}[theorem]{Definition}
\newtheorem{remark}[theorem]{Remark}
\newtheorem{example}[theorem]{Example}
\newcommand{\pushright}[1]{\ifmeasuring@#1\else\omit\hfill$\displaystyle#1$\fi\ignorespaces}
\title{Infinite-server queues with Hawkes input}
\author[a]{D.T. Koops\footnote{d.t.koops@uva.nl}}
\author[b]{M. Saxena\footnote{m.mayank@tue.nl}}
\author[b]{O.J. Boxma\footnote{o.j.boxma@tue.nl}}
\author[a]{M.  Mandjes\footnote{m.r.h.mandjes@uva.nl}}
\affil[a]{\small Korteweg-de Vries Institute, University of Amsterdam\\P.O.~Box 94248, 1090GE,  Amsterdam.}
\affil[b]{\small Eurandom and Department of Mathematics and Computer Science\\ Eindhoven University of Technology\\ P.O.~Box 513, 5600MB, Eindhoven.}
\date{\small \today}
\newcommand{\vb}{\vspace{3mm}}
\begin{document}
\maketitle

\abstract{\noindent {\footnotesize In this paper we study the number of customers in infinite-server queues with a self-exciting (Hawkes) arrival process. Initially we assume that service requirements are exponentially distributed and that the Hawkes arrival process is of a Markovian nature. We obtain a system of differential equations that characterizes the joint distribution of the arrival intensity and the number of customers. Moreover, we provide a recursive procedure that explicitly identifies (transient and stationary) moments. Subsequently, we allow for non-Markovian Hawkes arrival processes and non-exponential service times. By viewing the Hawkes process as a branching process, we find that the probability generating function of the number of customers in the system can be expressed in terms of the solution of a fixed-point equation. We also include various asymptotic results: we derive the tail of the distribution of the number of customers for the case that the intensity jumps of the Hawkes process are heavy-tailed, and we consider a heavy-traffic regime. We conclude the paper by discussing how our results can be used computationally and by verifying the numerical results via simulations.

\vspace{3mm}

\noindent
{\bf Keywords} -- Self-exciting processes -- {Hawkes processes} -- infinite-server queues -- branching processes -- heavy-tailed distributions -- heavy traffic}}\\
\noindent
{\bf MSC2010} -- 60K25 -- 60G55 -- 60J80

\setlength\parindent{0pt}
\section{Introduction}
\label{sec:intro}
A common assumption in queueing theory is that the customer arrival process is a Poisson process with a deterministic rate. However, various empirical studies have revealed that arrival processes may display \textit{overdispersion}, i.e.\ the variance of the number of arrivals in a given interval exceeds the corresponding expected value, cf.\ e.g.\ \cite{Heemskerk2016} for references. Overdispersion therefore indicates that the standard Poisson assumption (under which the above mentioned variance coincides with the expected value) is not valid. This has led to the study of queueing systems with overdispersed arrival processes \cite{Matthijsen,Koole,Koops2017, Heemskerk2016}. The current paper aims to contribute to this line of research.

\vb

The object of study of this paper is the {\it infinite-server queue} with  {\it Hawkes input}. 
\begin{itemize}
\item[$\circ$]
An infinite-server queue is a service system in which each customer is taken into service immediately upon arrival. Customers are served independently of each other and have i.i.d.\ service requirements. There is a large body of literature on infinite-server queues, but typically some regularity properties are assumed, such as Poisson (or renewal) arrivals.
\item[$\circ$]
{A Hawkes process is a point proces} with an \textit{exogenous} component and an \textit{endogenous} component. The exogenous component generates arrivals according to a homogeneous Poisson process. The endogenous component {entails that arrival epochs coincide with jumps in the arrival rate. The arrival intensity subsequently} behaves deterministically over time according to an \textit{excitation function}. This type of point process was originally studied by Alan Hawkes, cf.\ \cite{Hawkes}, who coined the name \textit{self-exciting process}; nowadays the name {\it Hawkes process} is frequently used as well. 
\end{itemize}
Hawkes processes have been used to model various phenomena, including the firing of neurons in the brain, earthquakes, criminality and riots; see e.g.\ \cite{Bacry} or \cite{Hawkesoverview} for references. Very recently, Hawkes models have also been used to study trending social media \cite{social1,social2,Pender}. Interestingly, Daw and Pender \cite{Pender} add a queueing aspect: they model the arrival process of visitors of a website as a Hawkes process, thus trying to capture the viral behavior of such an arrival process, taking into account that customers leave after a time which has a phase-type distribution. They study the number of visitors on the website at any time $t$. Another queueing application with Hawkes processes is high-frequency transaction processes in limit order books, cf.\ \cite{Bacry,Toke,Contdelarrard2}. Limit order books are in essence queueing systems on financial markets: the books keep track of buy and sell orders, that are waiting until an order arrives that matches the desired execution price above or below a certain limit. It is well known that trades tend to trigger other trades, which makes self-exciting models a natural choice.

Our work also combines queueing with Hawkes arrival processes, an area that is still largely unexplored.  In e.g.\ \cite{Contdelarrard2,Gao2016} scaling limits are derived for queueing systems that allow for Hawkes input. Scaling limits for infinite-server queues designed specifically for Hawkes input are derived in  \cite{Gao2016}; it states that exact and numerical analysis of this model is `challenging'. 
To the best of our knowledge, only \cite{Pender} pays attention to exact (i.e., non-asymptotic) analysis of queues driven by Hawkes processes. In \cite{Pender}, the focus is on exact analysis of an infinite-server queue driven by an unmarked Markovian Hawkes process (i.e., the jump sizes in the arrival intensity are deterministic). The model we consider is a general version of the one studied in [8]: in our case the driving process is a {\it marked} Hawkes process, i.e. the intensity jumps are stochastic. In addition, we use a branching process representation to cover a non-Markovian setting, in which the Hawkes intensity does not need to decay exponentially, and the job distributions are general. Furthermore, we discuss various novel asymptotic results and we show how the obtained results can be used computationally.

Our work is also related to \cite{Koops2017}; the main difference with the model studied there, is that in \cite{Koops2017} the arrival rate
to the infinite-server queue is a Cox process, and hence not self-exciting.
 
\vb

The paper is organized as follows. After having introduced Hawkes processes in Section \ref{sec:self-exciting queue}, we assume in Section \ref{sec:markov} that  the service requirements are exponentially distributed, and the excitation function is an exponentially decreasing function. As a consequence, the number of customers jointly with the Hawkes arrival rate is a Markov process; note that (marginally) the evolution of the number of customers is not Markov. We derive a partial differential equation (PDE) which characterizes the joint Laplace and $z$-transform of the Hawkes intensity and number of customers. We show that transient moments (of arbitrary order) satisfy a specific system of ordinary differential equations (ODEs), and we provide explicit first and second transient moments. Furthermore, by using the characteristic method, we simplify the PDE to a set of ODEs that characterizes the {joint} distribution, which we later use for numerical analysis.

Subsequently, in Section \ref{sec:nonmarkov}, we lift the exponentiality assumptions on the excitation function and the service requirements. The price to be paid comes in the form of less explicit results. Similar to the approach followed in  \cite{HawkesOakes}, the main idea is that the Hawkes process is represented in terms of a branching process, where in our model each node (representing a customer) in this branching process is served in an infinite-server queue. The analysis yields a fixed-point equation for the $z$-transform of the number of customers in the system (Section \ref{sec:nonmarkov}). By performing a finite number of iterations, we can find numerical approximations of the probability mass function of the number of customers,  akin to the approach proposed in \cite{whittfuncinverse}; explicit bounds on the error are derived as well. A numerical example is provided in Section \ref{sec:num}, where the methods from Sections \ref{sec:markov} and \ref{sec:nonmarkov} are verified by simulations as well. The fixed-point equation also leads to new asymptotic results (Section \ref{sec:as}): (i) in the situation that the intensity jumps of the Hawkes process are heavy-tailed, we derive the tail of the distribution of the number of customers, and (ii) we find the asymptotics of the number of customers in the heavy-traffic regime. 
Section~\ref{sec:Discussion} contains suggestions for further research.
%We also propose a numerical method to approximate the probability mass function, akin to the approach of \cite{whittfuncinverse}.  

\section{Preliminaries}
\label{sec:self-exciting queue}
In this section we first formally define the Hawkes process (cf.\ e.g.\ \cite{Bacry,Hawkesoverview,dynamiccontagion} for similar descriptions). Below we consider two equivalent definitions of the Hawkes process, presented in the same generality as they are used in the remainder of this paper.

\begin{definition}[Conditional intensity]
\label{def:Hawkesconditional}
Consider a counting process $(M(t))_{t\geqslant0}$, with associated filtration $({\mathscr F}(t))_{t\geqslant0}$, that satisfies
\[
\Pb(M(t+\Dt)-M(t)=m|{\mathscr F}(t))=
\begin{cases}
\L(t) \Dt + o(\Dt), &m=1\\
o(\Dt),&m>1\\
1-\Lambda(t)\Dt + o(\Dt), &m=0
\end{cases},
\]
as $\Dt\downarrow0$, where the conditional intensity has the form
\begin{equation}\label{Lambda_t}
\L(t) = \l_{\infty}  + \sum_{t_i<t} B_i h(t-t_i),
\end{equation}
where $t_1,t_2,\ldots$ denote arrival epochs, for a set of i.i.d.\ random variables $B_i$ with a nonnegative support, for some {\textit{reversion level}} $\l_{\infty} >0$ and some function $h:[0,\infty)\to[0,\infty)$ which are called the \textit{background intensity} and \textit{excitation function}, respectively. The summand in Eqn.\ \eqref{Lambda_t} is called a \textit{kernel}. The process $M(\cdot)$, as defined above, is called a \textit{self-exciting} or \textit{Hawkes process}.
\end{definition}
Note that an arrival increases {the future arrival intensity}, which in turn increases the probability of another arrival in the future, which explains the name `self-exciting process'.

{
\begin{remark}
There exist definitions of varying generality (cf.\ e.g.\ \cite{Bacry,Hawkesoverview,dynamiccontagion}). For example, there are multidimensional definitions (also referred to as mutually-exciting Hawkes processes), there is a distinction between \textit{marked} and \textit{unmarked} Hawkes processes, and the initial intensity can be taken unequal to the reversion level. The version defined above is considered to be a marked Hawkes process with a multiplicative kernel. It is `marked' because the kernel depends on a random variable $B_i$ (which is called a mark) that is sampled at each event. The kernel is multiplicative, since the size of the increase $B_i$ and the time effect $h(t-t_i)$ are multiplied. Furthermore, note that $\L(0)=\lambda_\infty$ by Eqn.\ \eqref{Lambda_t}. In general, one could consider the case $\L(0)=\l_0\neq\lambda_\infty$. However, this case is hardly more general but introduces more cumbersome notation. Indeed, the additional contribution of the initial intensity can be handled independently and in the same way as the rate increase due to other events.
\end{remark}
}

Next, we consider an alternative definition {of} Hawkes processes, which is based on a representation {of Hawkes processes as branching processes with immigration}. The observation that this is possible was already made in \cite{HawkesOakes}, and it is by now standard in the literature.

\begin{definition}[Cluster representation]
\label{def:Hawkescluster}
Let $\{B_i\}$ be a set of i.i.d.\ random variables with nonnegative support. Consider a (possibly infinite) $T>0$ and define a sequence of events $\{t_m\leqslant T\}$ according to the following procedure:
\begin{itemize}
\setlength\itemsep{0.01em}
\item[$\circ$] Consider a set of \textit{immigrant events} $\{t_m^{(0)}\leqslant T\}$ that arrive according to a homogeneous Poisson process with rate $\l_{\infty} $ in the interval $[0,T]$.
\item[$\circ$]  Set $n=0$. For each arrival labeled by $t_{m'}^{(n)},$ generate a sequence of next generation events $\{t_m^{(n+1)}\leqslant T\}$ in the interval $[t_{m'}^{(n)},T]$, by sampling $B_{m'}$ and then sampling from the resulting Poisson process with time-dependent rate $B_{m'} h(t-t_{m'})$.
\item[$\circ$] Iterate the above rule for $n=1,2,\ldots$, until no more events are generated in $[0,T]$, so as to obtain the event sequence $E_{n}(T):=\{t_m^{(n)}\leqslant T\}$, for $n=0,1,\ldots$.
\end{itemize}
Define $M(t):=|\cup_{n=0}^\infty E_{n}(t)|$ as the number of events in $[0,t]$, then $(M(t))_{0\leqslant t \leqslant T}$ is called a \textit{self-exciting} or \textit{Hawkes process} (on the interval $[0,T]$).
\end{definition}

In the remainder of this paper we will use both definitions to analyze {infinite-server queues driven by Hawkes arrival processes}. Definition \ref{def:Hawkesconditional} plays a key role in Section \ref{sec:markov}, in which we consider Markovian Hawkes queues. The approach in Section \ref{sec:markov} does not apply for non-Markovian queues; in that case we resort to the interpretation as in Definition \ref{def:Hawkescluster}, which is discussed thoroughly in Section \ref{sec:nonmarkov}.

\section{A Markovian Hawkes-fed infinite-server queue}
\label{sec:markov}
In the previous section we introduced the Hawkes process, which serves as the input process of our infinite-server queue. 
In this section we suppose that the arrival process is a Markovian Hawkes process, i.e., the excitation function is exponential and the service requirements are exponentially distributed. In particular, we consider the situation that $h(t)=e^{-r t}$, $r>0$, and the service requirements are $J\sim \exp(\mu)$. In addition, we assume that $B$ (distributed as the $B_i$ featuring in the definition of the Hawkes process) is a random variable such that $\Pb(B>0)=1$. The Hawkes process acts as an input process to an infinite-server system, i.e.\ we could call this model $\textrm{Hawkes}/M/\infty$ in Kendall's notation. 
In Subsection~\ref{sec3.1} we characterize the joint transform of the Hawkes intensity $\Lambda(t)$ and the number of customers $N(t)$
of the $\textrm{Hawkes}/M/\infty$ queue at time $t$, in terms of the solution of an ODE.
In Subsection~\ref{sec:transientmoments} we develop a recursive procedure that gives the transient moments of $(\Lambda(t),N(t))$.
Steady-state moments are briefly discussed in Subsection~\ref{sec:stationarymoments}.

\subsection{Characterization of the queueing process}
\label{sec3.1}
In this section our main objective is to characterize the double transform
\[
\zeta(t,z,s):= \E z^{N(t)}e^{-s\L(t)},
\]
which uniquely defines the joint transient distribution of $(N(t),\Lambda(t))$. 

\begin{theorem}
\label{thm:zeta}
Let the arrival process be a Markovian Hawkes process. Then, given $\L(0)=\l_{\infty} $ and $N(0)=0$,
\begin{equation}
\label{eq:zeta}
\zeta(t,z,s) = e^{-s(t) \l_{\infty} } \exp\left(-\l_{\infty}  r \int_0^t s(u)\dif u\right),
\end{equation}
where{, with $\beta(s):=\E e^{-s B}$}, $s(\cdot)$ solves the ODE
\begin{equation}\label{eq:ODE2}
s'(u)+rs(u) + (1+(z-1)e^{-\mu u})\beta(s(u))-1=0,\quad 0\leqslant u \leqslant t,
\end{equation}
with boundary condition $s(0)=s$. 
\end{theorem}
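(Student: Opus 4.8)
The plan is to exploit that, when $h(t)=e^{-rt}$ and the service times are exponential, the pair $(\Lambda(t),N(t))$ is a time-homogeneous Markov process: between events $\Lambda$ decays deterministically towards $\lambda_\infty$ via $\dot\Lambda=-r(\Lambda-\lambda_\infty)$ while $N$ is constant; at rate $\Lambda(t)$ an arrival occurs, raising $N$ by one and raising $\Lambda$ by an independent copy of $B$; and at rate $\mu N(t)$ a departure occurs, lowering $N$ by one. I would first write down the extended generator $\mathcal L$ of this piecewise-deterministic Markov process and apply it to the test function $f(\lambda,n)=z^{n}e^{-s\lambda}$. Using $\partial_\lambda f=-sf$, the identity $f(\lambda+B,n+1)=z\,e^{-sB}f(\lambda,n)$ (so that averaging the arrival increment over the mark gives $(z\beta(s)-1)f$), and $f(\lambda,n-1)=z^{-1}f$, this yields
\[
\mathcal L f(\lambda,n)=\Big(rs(\lambda-\lambda_\infty)+\lambda\,(z\beta(s)-1)+\mu n\,(z^{-1}-1)\Big)f(\lambda,n).
\]

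Next I would convert $\partial_t\zeta(t,z,s)=\E[\mathcal L f(\Lambda(t),N(t))]$ (Kolmogorov's forward equation) into a closed PDE for $\zeta$, by eliminating the explicit $\Lambda(t)$ and $N(t)$ prefactors through $\E[\Lambda(t)z^{N(t)}e^{-s\Lambda(t)}]=-\partial_s\zeta$ and $\E[N(t)z^{N(t)}e^{-s\Lambda(t)}]=z\,\partial_z\zeta$. The result is the first-order linear PDE
\[
\partial_t\zeta+\big(rs+z\beta(s)-1\big)\partial_s\zeta+\mu(z-1)\partial_z\zeta+r\lambda_\infty s\,\zeta=0,
\]
with initial condition $\zeta(0,z,s)=e^{-s\lambda_\infty}$ dictated by $N(0)=0$ and $\Lambda(0)=\lambda_\infty$.

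The last step is the method of characteristics. Fixing the evaluation point $(t,z,s)$ and running the characteristic curves backward in time via the substitution $u\mapsto t-u$, the $z$-characteristic solves $z'(u)=-\mu(z(u)-1)$ with $z(0)=z$, so $z(u)=1+(z-1)e^{-\mu u}$; substituting this into the $s$-characteristic $s'(u)=-\big(rs(u)+z(u)\beta(s(u))-1\big)$ reproduces exactly \eqref{eq:ODE2} with $s(0)=s$. Along the same curve $\tfrac{d}{du}\zeta=r\lambda_\infty s(u)\,\zeta$, whence $\zeta(t,z,s)=\zeta(0,z(t),s(t))\exp\!\big(-r\lambda_\infty\int_0^t s(u)\,\dif u\big)$; inserting the boundary value $\zeta(0,z(t),s(t))=e^{-s(t)\lambda_\infty}$ gives \eqref{eq:zeta}.

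I expect the obstacles to be bookkeeping rather than conceptual: assembling the generator correctly (the two separate jump mechanisms together with the mean-reverting drift) and keeping signs consistent through the time reversal so that \eqref{eq:ODE2} comes out with the stated boundary condition. The remaining technical points — finiteness of the first moments of $\Lambda(t)$ and $N(t)$, so that the expectations above and their $\partial_s,\partial_z$-derivatives exist and the differentiation under the expectation is justified, and existence and uniqueness of a solution of \eqref{eq:ODE2} on $[0,t]$ — are standard and I would dispatch them briefly.
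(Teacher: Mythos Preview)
Your proposal is correct and follows essentially the same route as the paper: derive the first-order PDE
\[
\partial_t\zeta+(rs+z\beta(s)-1)\partial_s\zeta+\mu(z-1)\partial_z\zeta+r\lambda_\infty s\,\zeta=0
\]
and solve it by the method of characteristics, with the time reversal $u\mapsto t-u$ producing \eqref{eq:ODE2}. The only difference is cosmetic: the paper reaches the PDE by writing the Kolmogorov forward equation for the joint density $f(t,k,\lambda)$ and then taking the Laplace transform in $\lambda$ and the $z$-transform in $k$ in two separate steps, whereas you apply the generator directly to the test function $z^{n}e^{-s\lambda}$ and read off the PDE in one shot---a slightly more economical derivation of the same equation.
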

\begin{remark}
{Note that this result is closely related to results about Hawkes counting processes. For example, a version of \cite[Thm. 3.1]{dynamiccontagion} is retrieved when $\mu=0$ is substituted in Eqn.\ \eqref{eq:ODE2} (note: set $\rho=0$ in \cite{dynamiccontagion}, as we do not consider a shot-noise background process here). Note that $\mu=0$ corresponds to infinitely long service times, and hence the number-of-customers process reduces to a counting process as studied in \cite{dynamiccontagion}. In the situation where $\mu=0$, it was possible to derive an explicit equation for the probability generating function of the counting process by separation of variables. Unfortunately, in our situation the variables $s(u)$ and $u$ in Eqn.\ \eqref{eq:ODE2} are not separable due to the additional factor $e^{-\mu u}$, and hence we cannot solve it analytically.}
\end{remark}

\begin{proof}[Proof of Theorem \ref{thm:zeta}]
We derive the joint distribution {of $(N(t),\Lambda(t))$}. Define
\[
F(t,k,\l)=\Pb(N(t)=k,\Lambda(t)\leqslant \lambda),\quad f(t,k,\l)=\f{\partial F(t,k,\l)}{\partial\l}.
\]
Considering the evolution of the Markovian system between $t$ and $t+\Delta t$ yields
\begin{align*}
F(t+\Dt,k, \l - r(\l-\l_{\infty} )\Dt) &= \int_{0}^\l y \Dt \Pb(B\leqslant \l-y) f(t,k-1,y)\dif y\\&+(k+1)\mu \Dt F(t,k+1,\l)\\
&+F(t,k,\l)(1-k\m\Dt)-\int_{0}^\l y\Dt f(t,k,y)\dif y.
\end{align*}
After elementary manipulations and letting $\Dt\downarrow0$, it follows that
\begin{align*}
\f{\partial F(t,k,\l)}{\partial t} - r(\l-\l_{\infty} )\f{\partial F(t,k,\l)}{\partial \l} &= \int_{0}^\l y \Pb(B\leqslant\l-y)f(t,k-1,y)\dif y \\
& + (k+1)\m F(t,k+1,\l)\\&- k\mu F(t,k,\l)-\int_{0}^\l y f(t,k,y)\dif y.
\end{align*}
Since we assumed that $\Pb(B\leqslant 0)=0$, differentiating with respect to $\l$ yields
\begin{align}
\nonumber
\lefteqn{\f{\partial f(t,k,\l)}{\partial t} - \f{\partial}{\partial\l}[r\l f(t,k,\l)] + {r}\l_{\infty} \f{\partial}{\partial \l}f(t,k,\l) }\\&= \int_{0}^\l y f(t,k-1,y){\dif \Pb(B\leq \lambda-y)}
 + (k+1)\mu f(t,k+1,\l) - (k\mu+\l) f(t,k,\l).\label{eq:pde}
\end{align}
The next step consists of transforming Eqn.\ \eqref{eq:pde} {with respect to the Hawkes intensity $\l$}, and to that end we define the transform
\[
\xi(t,k,s):=\int_{0}^\infty e^{-s\l} f(t,k,\l)\dif\l.
\]
{After transformation we find}
\begin{align*}
&\f{\partial \xi(t,k,s)}{\partial t} + rs \f{\partial\xi(t,k,s)}{\partial s} + rs\l_{\infty} \xi(t,k,s)\\
&= -\f{\partial\xi(t,k-1,s)}{\partial s} \beta(s)+(k+1)\m\xi(t,k+1,s)-k\m\xi(t,k,\l)+\f{\partial\xi(t,k,s)}{\partial s},
\end{align*}
which we can rewrite as the partial differential equation
\begin{align*}
\lefteqn{\hspace{-2cm}
\f{\partial \xi(t,k,s)}{\partial t} + (rs-1) \f{\partial\xi(t,k,s)}{\partial s}+\f{\partial\xi(t,k-1,s)}{\partial s} \beta(s) + rs\l_{\infty} \xi(t,k,s) }\\&= (k+1)\m\xi(t,k+1,s)-k\m\xi(t,k,\l).
\end{align*}
Next, we transform this equation in the number of customers variable $k$, for which we use the transform
\[
\zeta(t,z,s)= \sum_{k=0}^\infty z^k \xi(t,k,s)=\E z^{N(t)}e^{-s\L(t)}.
\]
This yields
\begin{equation}
\label{eq:stronghawkesPDE}
\f{\partial \zeta(t,z,s)}{\partial t} + (rs+z\b(s)-1) \f{\partial\zeta(t,z,s)}{\partial s} + \m(z-1)\f{\partial\zeta(t,z,s)}{\partial z} =-rs\l_{\infty} \zeta(t,z,s).
\end{equation}

Now let $s$ and $z$ be parametrized by $u$. Then the characteristic equations are
\begin{align}
\label{eq:sz}
-s'(u)+rs(u) + z(u)\beta(s(u))-1 &= 0,\\ 
-z'(u)+\mu(z(u)-1) &= 0\label{eq:z},
\end{align}
with the boundary conditions $s(t)=s$ and $z(t)=z$. The solution of Eqn.\ \eqref{eq:z} is 
\[
z(u)= 1+ C e^{\mu u},
\]
with $C$ determined by $z(t) = z = C e^{\mu t} + 1$, i.e., $C= (z-1) e^{-\mu t}$.
We thus find that 
$
z(u)= 1+ (z-1) e^{-\mu(t-u)},
$ which we can substitute in Eqn.\ \eqref{eq:sz}, so as to obtain the ODE
\begin{equation*}
-s'(u)+rs(u) + (1+(z-1) e^{-\mu(t-u)}) \beta(s(u))-1 = 0.
\end{equation*}
By substituting $t$ for $t-u$, we obtain the ODE
\begin{equation*}
s'(t)+rs(t) + (1+(z-1)e^{-\mu t})\beta(s(t))-1=0,
\end{equation*}
with the boundary condition $s(0)=s$. The result follows.
\end{proof}

Numerically, one can obtain the probability mass function of $N(t)$ by first solving the differential equation, and then applying a Fourier inversion algorithm. Numerical results are given in Sec.\ \ref{sec:num}.
Another powerful feature of Theorem \ref{thm:zeta} is that it allows us to find moments, as is presented in Sections \ref{sec:transientmoments}--\ref{sec:stationarymoments} below.

\subsection{Transient moments}
\label{sec:transientmoments}
{In this section we discuss the computation of joint transient moments of the type $\E \Lambda^g(t)N^q(t)$, for some integers $g$ and $q$, under the additional assumption that $\E B^g < \infty$}. The marginal moments of the Hawkes process are known and can be found in e.g.\ \cite[Lemma 3.1 \& Thm.\ 3.6]{dynamiccontagion},  but we include them here for completeness.  The key idea is to use the PDE \eqref{eq:stronghawkesPDE} to derive ODEs for the joint moments. 
To this end, rewrite the PDE\ \eqref{eq:stronghawkesPDE} as
	\begin{align}\label{Eq: Transiernt1}
  &\f{{\rm{d}} }{{\rm{d}}  t} \E z^{N(t) }e^{-s\L(t)}	- (rs+z\b(s)-1) \E \Lambda(t) z^{N(t) }e^{-s\L(t)} + \m(z-1) \E N(t) z^{N(t) - 1}e^{-s\L(t)}  \nonumber \\ &=-rs\l_{\infty}  \E z^{N(t)} e^{-s\L(t)}.
	\end{align}
	We begin by differentiating the above equation $g\in{\mathbb N}$ times with respect to $s$ and then inserting $s = 0$, which expresses the $(g + 1)^{\rm th}$ moment of $\Lambda(t)$ in terms of the first up to $g^{\rm th}$ moment of $\Lambda(t)$, through the following ODE (where we have assumed that $\E B^g < \infty$):
	\begin{align}\label{Eq: Transiernt11}
	 \f{{\rm{d}} }{{\rm{d}}  t} \E \Lambda^{g}(t) z^{N(t)}  &+ g (r - z\,\E B )\E \Lambda^g(t) z^{N(t)}  +  \m (z - 1) \E \Lambda^g(t) N(t) z^{N(t) - 1}\nonumber \\ 
	&=  (z - 1) \E \Lambda^{g + 1}(t) z^{N(t)} + \mathbbm{1}_{\{g \geqslant 1\}}~  g \l_{\infty}  r \E \Lambda^{g - 1}(t) z^{N(t)}  \nonumber \\ &+ z\  \mathbbm{1}_{\{g \geqslant 2\}}  \sum_{j = 0}^{g - 2} {{g}\choose{j}} \E B^{g - j} ~ \E \Lambda^{j + 1}(t) z^N(t), ~ g = 0, 1, \dots ~ .	
%	& - \f{{\rm{d}} }{{\rm{d}}  t} \E \Lambda^{g}(t) z^{N(t)} + (z - 1) \E \Lambda^{g + 1}(t) z^{N(t)} - g (r - z\,\E B )\E \Lambda^g(t) z^{N(t)}  \nonumber \\ 
%	&=  \m (z - 1) \E \Lambda^g(t) N(t) z^{N(t) - 1} - g \l_{\infty}  r \E \Lambda^{g - 1}(t) z^{N(t)} \nonumber \\ &- z\  \mathbbm{1}_{\{g \geqslant 2\}}  \sum_{j = 0}^{g - 2} {{g}\choose{j}} \E B^{g - j} ~ \E \Lambda^{j + 1}(t) z^N(t).
	\end{align}

	So as to obtain a relation between the joint moments and the moments that correspond to number of customers, we differentiate Eqn.\ \eqref{Eq: Transiernt11} $q + 1$ times with respect to $z$ and insert $z = 1$. This gives us the key relation, for $g=0,1,\dots$, $q=0,1,\dots$:
\begin{align}\label{Eq: Transiernt2}
& \f{{\rm{d}} }{{\rm{d}}  t} \E \Lambda^{g}(t) \bar{N}^q(t) + ((q + 1)  \m + g(r - \E B)) \E \Lambda^g(t) \bar{N}^{q}(t) \nonumber \\ &=
(q + 1)\E \Lambda^{g + 1}(t) \bar{N}^{q - 1}(t) + \mathbbm{1}_{\{g \geqslant 1\}}~ g \l_{\infty}  r\, \E \Lambda^{g - 1}(t) \bar{N}^{q}(t) + (q + 1) g\, \E B\, \E \Lambda^g(t) \bar{N}^{q - 1}(t) \nonumber \\ & + \mathbbm{1}_{\{g \geqslant 2\}}  \sum_{j = 0}^{g - 2} {{g}\choose{j}}  \E B^{g - j}   \left[ (q + 1) \E \Lambda^{j + 1}(t) \bar{N}^{q - 1}(t) + \E \Lambda^{j+1}(t) \bar{N}^{q}(t)\right],
\end{align}
where $\bar{N}^{q}(t) := N(t) (N(t) - 1) \cdots (N(t) - q)$ and $\bar{N}^{-1}(t) := 1$.

Now by \eqref{Eq: Transiernt2} we can obtain a system of first order ODE\,s. To this end, we  substitute in \eqref{Eq: Transiernt2} a combination of indices,  $g := k$, $q := q - k$ for values of  $k \in\{0,\ldots, q + 1\}$. Denoting \[Z^{(q+2)}(t) := \big[\,\E \bar{N}^q(t), \E \L(t) \bar{N}^{q - 1}(t), \dots, \E \L^q(t) \bar{N}^0(t), \E \L^{q + 1}(t)\,\big]^{\rm T}, \] it follows that  the vector $Z^{(q + 2)}(t), q = 0, 1, 2, \dots,$ satisfies the ODE
		\begin{equation}\label{Main_ODE_Moemnts}
		\f{{\rm d}}{{\rm d}t} Z^{(q+2)}(t) =  A_1^{(q+2)} Z^{(q+2)}(t) +  A_0^{(q+2)},
		\end{equation}
		with 
		\begin{align}		
		 A_1^{(q+2)} &= \begin{bmatrix}
		- d^{(q + 1)}_{0} \!& q + 1& 0 &  \cdots & 0 & 0 \\
		0 & - d^{(q)}_{1}  & q &  \cdots & 0 & 0 \\
		0 & 0 & - d^{(q - 1)}_{2} &  \cdots & 0 & 0\\
		\vdots & \vdots & \vdots &   \ddots & \vdots & \vdots \\
		0 & 0 & 0  &  \cdots & - d^{(1)}_{q}  & 1\\
		0 & 0 & 0 &   \cdots &  0 & -d_{q + 1}^{(0)}
		\end{bmatrix}, 	~~	
		 A_0^{(q+2)} = \begin{bmatrix}
		b^{(q)}_0 \\ 
		b^{(q-1)}_1 \\ 
		b^{(q-2)}_2 \\  
		\vdots \\ 
		 b^{(0)}_q \\ 
		b^{(-1)}_{q+1}
		\end{bmatrix},
		\end{align}
		where, for $k \in\{0,\ldots, q + 1\}$, 
		\begin{align*}
				d^{(q + 1 - k)}_k &:= (q + 1 - k) \mu + k ({r- \E B}), \\ 
				b^{(q - k)}_k &:= \mathbbm{1}_{\{k \geqslant 1\}} ~ k \l_{\infty}  r \,\E \Lambda^{k - 1}(t) \bar{N}^{q - k}(t) + (q - k + 1) k \E B \,\E \Lambda^k(t) \bar{N}^{q - k - 1}(t)  \\ &+ \mathbbm{1}_{\{k \geqslant 2\}}  \sum_{j = 0}^{k - 2} {{k}\choose{j}}  \E B^{k - j}   \left[ (q - k + 1) \E \Lambda^{j + 1}(t) \bar{N}^{q - k - 1}(t) + \E \Lambda^{j+1}(t) \bar{N}^{q - k}(t)\right].
		\end{align*}
		Note that  $d^{(q+ 1)}_{0},\ldots, d^{(0)}_{q+1}$ are the (distinct) eigenvalues of the matrix $ A_1^{(q+2)}$.

	\begin{proposition}\label{All_Moments} 
		The solution of the ODE \eqref{Main_ODE_Moemnts} is 
		\begin{equation}\label{General_Moments_Sol}
		Z^{(q+2)}(t) = e^{A_1^{(q+2)} t} \ Z^{(q+2)}(0) + \int_{0}^{t}  e^{A_1^{(q+2)} (t - s)}  A_0^{(q+2)} \ {\rm d} s,
		\end{equation}
		where, with  $I$  a $(q + 2) \times (q + 2)$ identity matrix,
		\begin{equation}\label{Exp_Matrix}
		    e^{A_1^{(q+2)}t}= \sum_{i = 1}^{q + 2} e^{d^{q + i}_{i - 1} t}\prod_{\substack{j=1,\\j\neq i}}^{q+2} \f{A_1^{(q + 2)} - d^{(q - j + 2)}_{j - 1} I}{d^{(q - i + 2)}_{i - 1} - d^{(q - j + 2)}_{j - 1}}.
		\end{equation} 
	\end{proposition}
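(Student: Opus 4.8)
The plan is to solve the linear inhomogeneous ODE system \eqref{Main_ODE_Moemnts} by the standard variation-of-constants formula, and then to produce an explicit closed form for the matrix exponential $e^{A_1^{(q+2)}t}$ using the fact that $A_1^{(q+2)}$ is upper-triangular with \emph{distinct} diagonal entries. The formula \eqref{General_Moments_Sol} is simply the general solution of $\dot Z = A_1 Z + A_0$ with constant $A_1$ (note: although the notation $b^{(q-k)}_k$ involves lower-order moments that depend on $t$, from the point of view of solving for $Z^{(q+2)}$ the vector $A_0^{(q+2)}$ is treated as a known forcing term, since those lower-order moments have been determined in a previous recursion step); so the only real content is \eqref{Exp_Matrix}.

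First I would recall that a matrix $A$ with distinct eigenvalues $\mu_1,\dots,\mu_n$ is diagonalizable, and that the \emph{Lagrange–Sylvester interpolation formula} (equivalently, the partial-fraction / spectral-projector decomposition) gives
\[
e^{At} \;=\; \sum_{i=1}^{n} e^{\mu_i t}\,\prod_{\substack{j=1\\ j\neq i}}^{n}\frac{A-\mu_j I}{\mu_i-\mu_j},
\]
where the spectral projector $P_i:=\prod_{j\neq i}(A-\mu_j I)/(\mu_i-\mu_j)$ satisfies $P_iP_j=\delta_{ij}P_i$, $\sum_i P_i = I$, and $AP_i=\mu_iP_i$. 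For our matrix, the eigenvalues are the diagonal entries, which by the remark immediately preceding the proposition are the distinct numbers $d^{(q+1)}_0, d^{(q)}_1, \dots, d^{(0)}_{q+1}$; substituting $n=q+2$, $\mu_i = d^{(q-i+2)}_{i-1}$ (so that $\mu_1 = d^{(q+1)}_0$, $\mu_{q+2}=d^{(0)}_{q+1}$) yields exactly \eqref{Exp_Matrix}. I would therefore structure the proof as: (i) state variation of constants to get \eqref{General_Moments_Sol}; (ii) verify distinctness of the $d$'s (this is where one actually needs a small argument — see below); (iii) invoke Lagrange–Sylvester, or alternatively prove it on the spot by checking that the right-hand side of \eqref{Exp_Matrix}, call it $\Phi(t)$, satisfies $\Phi(0)=I$ and $\Phi'(t)=A_1^{(q+2)}\Phi(t)$, using $A_1 P_i = d_i P_i$ and $\sum_i P_i = I$, and then appeal to uniqueness of the solution to the matrix IVP.

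The step I expect to be the main obstacle — or at least the one needing genuine care rather than bookkeeping — is establishing that the eigenvalues $d^{(q+1-k)}_k = (q+1-k)\mu + k(r - \E B)$, for $k=0,\dots,q+1$, are pairwise distinct, since the denominators $d^{(q-i+2)}_{i-1} - d^{(q-j+2)}_{j-1}$ in \eqref{Exp_Matrix} must be nonzero. One has $d^{(q+1-k)}_k - d^{(q+1-k')}_{k'} = (k'-k)\mu + (k-k')(r-\E B) = (k-k')\bigl((r-\E B) - \mu\bigr)$, so distinctness holds precisely when $r - \E B \neq \mu$, i.e.\ $\mu + \E B \neq r$. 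I would flag this as a (mild) genericity/standing assumption; it is the natural stability-type condition for the Markovian Hawkes queue (indeed $\E B < r$ is already the condition for the Hawkes intensity to have finite mean, and one additionally excludes the single degenerate value $r = \mu + \E B$). If one wanted to avoid the assumption entirely, the matrix exponential could still be written via the Jordan form or via the resolvent $e^{A_1 t} = \frac{1}{2\pi i}\oint e^{zt}(zI-A_1)^{-1}\dif z$, but since the paper only claims \eqref{Exp_Matrix} under the stated distinctness, the cleanest route is to note the condition and proceed with Lagrange–Sylvester. Everything else — the variation-of-constants identity and the verification $\Phi' = A_1\Phi$, $\Phi(0)=I$ — is routine linear algebra.
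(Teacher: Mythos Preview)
Your proposal is correct and follows essentially the same approach as the paper: the paper's proof simply says that \eqref{General_Moments_Sol} follows ``in a straightforward way'' from the ODE and that \eqref{Exp_Matrix} is obtained from an interpolation-based formula for the matrix exponential (citing a reference), which is precisely the Lagrange--Sylvester formula you invoke. Your treatment is in fact more complete: you correctly compute that $d^{(q+1-k)}_k - d^{(q+1-k')}_{k'} = (k-k')\bigl((r-\E B)-\mu\bigr)$ and hence that distinctness requires $\mu\neq r_0$, a genericity condition the paper asserts without verification here but acknowledges later (e.g.\ the ``obvious adaptation when $\mu=r_0$'' in Corollary~\ref{cor:momentsLambda}); and you rightly note that $A_0^{(q+2)}$ is in general time-dependent through the lower-order moments, which the paper's notation in the proposition obscures but makes explicit in the subsequent corollaries.
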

	\begin{proof} Formula \eqref{General_Moments_Sol} follows from \eqref{Main_ODE_Moemnts} in a straightforward way. The exponential of the matrix is computed by using the {interpolation-based} formula given in  \cite[p. 101, Eqn. (4.18)]{ghufran2010computation}.
		%\cite[p. 827, Eqn. (1.4)]{dehghan2010computing}.
	\end{proof}
	
	\begin{corollary}
	\label{cor:momentsLambda}
	If $r\neq b_1:=\E B$, and with $r_0:=r-b_1$, it holds that 
	\begin{align*}
	\E \L(t) &= \frac{\l_{\infty}  r }{r_0} - \frac{\l_{\infty}  b_1}{r_0} e^{-r_0t}, \\
	\E N(t)  &= \frac{\l_{\infty}  r}{\m r _0} - \frac{\l_{\infty}  b_1}{r_0(\m -r_0)} e^{- r_0t} - \f{\l_{\infty}  (r - \m)}{\m(\m -r_0)} e^{-\m t},
	\end{align*}	
	with an obvious adaptation when $\m = r_0$. 
\end{corollary}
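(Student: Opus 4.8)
The plan is to obtain Corollary~\ref{cor:momentsLambda} as a direct specialisation of the moment recursion \eqref{Eq: Transiernt2} (equivalently, of Proposition~\ref{All_Moments} with $q=0$): nothing is needed beyond solving two coupled scalar first-order linear ODEs and matching them to the initial data $\L(0)=\l_{\infty}$, $N(0)=0$.

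First I would isolate the equation for $\E\L(t)$. Putting $z=1$ in \eqref{Eq: Transiernt11} with $g=1$ — equivalently, reading off the bottom component of \eqref{Main_ODE_Moemnts} for $q=0$ — all the terms carrying a factor $(z-1)$ and the indicator $\mathbbm{1}_{\{g\geqslant 2\}}$ disappear, and one is left with
\[
\frac{{\rm d}}{{\rm d}t}\,\E\L(t) \;=\; \l_{\infty} r - r_0\,\E\L(t),\qquad r_0:=r-\E B,
\]
with $\E\L(0)=\l_{\infty}$. Multiplying by the integrating factor $e^{r_0 t}$ and integrating gives $\E\L(t)=\l_{\infty} r/r_0+(\l_{\infty}-\l_{\infty} r/r_0)e^{-r_0t}$; since $\l_{\infty}-\l_{\infty} r/r_0=-\l_{\infty} b_1/r_0$ with $b_1:=\E B$, this is precisely the first formula in the statement. (Alternatively one may simply quote the known transient mean of a Hawkes intensity, cf.\ \cite[Lemma 3.1]{dynamiccontagion}.)

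Next I would treat $\E N(t)$. Taking $g=q=0$ in \eqref{Eq: Transiernt2}, and noting $\bar{N}^{-1}\equiv 1$ and $\bar{N}^{0}=N$, every forcing term on the right vanishes except the first, leaving the coupled equation
\[
\frac{{\rm d}}{{\rm d}t}\,\E N(t)\;=\;\E\L(t)-\m\,\E N(t),\qquad \E N(0)=0 .
\]
Substituting the closed form of $\E\L(t)$ and integrating against $e^{\m t}$ expresses $\E N(t)$ as a linear combination of $1$, $e^{-r_0 t}$ and $e^{-\m t}$. The constant and the $e^{-r_0t}$–coefficient can be read off at once (the constant is $\l_{\infty} r/(\m r_0)$, consistent with the steady–state identity $\E N(\infty)=\E\L(\infty)/\m$), and the $e^{-\m t}$–coefficient is then forced by $\E N(0)=0$; collecting the three contributions over the common denominator $\m r_0(\m-r_0)$ and simplifying with $r_0=r-b_1$ yields the second formula in the statement.

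The only point needing a separate remark is the resonance $\m=r_0$, in which case $e^{-\m t}$ is itself a homogeneous solution, so the particular solution of the $\E N$–equation driven by the $e^{-r_0t}$ term must be taken of the form $c\,t\,e^{-\m t}$; this is the ``obvious adaptation'' referred to in the statement, and it is also the reason Proposition~\ref{All_Moments}, and formula \eqref{Exp_Matrix} in particular, is phrased under the distinct–eigenvalue assumption $d_0^{(1)}=\m\neq r_0=d_1^{(0)}$. There is no genuine obstacle here: the argument is a short computation, the only place where care is needed being the algebraic simplification of the $e^{-\m t}$–coefficient.
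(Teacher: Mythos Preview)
Your proposal is correct and follows essentially the same route as the paper: both derive the pair of linear ODEs for $\E\L(t)$ and $\E N(t)$ from the moment recursion with $q=0$ and solve them with the given initial data. The only cosmetic difference is that the paper packages the two equations into the matrix system \eqref{first_moments_ODE} and invokes the matrix-exponential formula \eqref{Exp_Matrix} from Proposition~\ref{All_Moments}, whereas you solve the two scalar equations sequentially with integrating factors; your way is marginally more elementary, the paper's way illustrates the general machinery, but the content is identical.
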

\begin{proof}
		To get the first moments of the model we can use Proposition \ref{All_Moments} for the case $q = 0$:
	\begin{equation}\label{first_moments_ODE}
	\f{{\rm d}}{{\rm d}t}
	Z^{(2)}(t) =  A_1^{(2)} Z^{(2)}(t) +  A_0^{(2)},
	\end{equation}
	with 
	\[Z^{(2)}(t) = \begin{bmatrix}
	\E N(t) \\ 
	\E \L(t)
	\end{bmatrix}, \:\:\: A_1^{(2)} = \begin{bmatrix}
	- \m & 1 \\
	0 & - r_0	\end{bmatrix}, ~\mbox{and}~\: A_0^{(2)} = \begin{bmatrix}
	0 \\ \l_{\infty}  r 
	\end{bmatrix}.	\]
We know the solution of the general ODE \eqref{Main_ODE_Moemnts} from Eqn. \eqref{General_Moments_Sol}. Therefore, for the case $q = 0$ we get the solution to the ODE \eqref{first_moments_ODE}:
\begin{equation}\label{General_first_moments}
 \begin{bmatrix}
\E N(t) \\ 
\E \L(t)
\end{bmatrix} =   e^{A_1^{(2)} t} \begin{bmatrix}
\E N(0) \\ 
\E \L(0)
\end{bmatrix}+ \int_{0}^{t} e^{A_1^{(2)} (t - s)}   A_0^{(2)} {\rm d} s,
\end{equation}
where $(\E N(0),\E \L(0)) = (0,\l_{\infty} )$. Now it remains to evaluate  $e^{A_1^{(2)}t}$. By virtue of \eqref{Exp_Matrix}, we find
\[
e^{A_1^{(2)} t} = \begin{bmatrix}
e^{- \m t} & {\displaystyle \f{e^{-\m t} - e^{- r_0 t}}{-\m + r_0}} \\
&\vspace{-2mm}\\
0 & e^{- r_0t}
\end{bmatrix}.
\]
Substituting this exponential matrix in \eqref{General_first_moments} yields the desired results. 
\end{proof}

	We now explicitly state the second transient moments. Define $b_2:=\E B^2$.
\begin{corollary}
	\label{cor:momentsN}
	If $r_0\neq 0$, then for all $t\geqslant0$,
	\begin{align*}
	 \E \L^2(t) &=  \frac{\l_{\infty}  r (b_2 + 2\l_{\infty}  r)}{2 r_0 ^2} - \frac{\l_{\infty}  b_1(b_2 + 2\l_{\infty}  r)}{r_0 ^2} e^{- r_0 t} + D_1 e^{-2 r_0 t}, \\	
	\E \L(t) N(t)  &= \frac{\l_{\infty}  r}{r_0 (\m + r_0)} \left( \frac{b_2 + 2 \l_{\infty}  r}{2 r_0 } + \frac{\l_{\infty}  r + \m b_1}{\m} \right) - \frac{\l^2_0 ~r ~(r - \m)}{\m r_0 (\m -  r_0 )} e^{-\m t}  \nonumber \\ &	- \frac{\l_{\infty}  b_1}{\m r_0 } \left( b_1 + \frac{b_2 + 2\l_{\infty}  r}{r_0} + \frac{\l_{\infty}  r}{\m -  r_0 } \right) e^{-  r_0 t} \nonumber \\
	& + \frac{D_1}{\m -  r_0 } e^{-2 r_0 t}  	+ D_2 e^{-(\m + r_0) t},\\
	\E N^2(t)  & =\frac{\l_{\infty}  r}{\m r_0(\m + r_0)} \left( \frac{b_2 + 2 \l_{\infty}  r}{2 r_0 } + \frac{\m(\m + r) + \l_{\infty}  r}{\m} \right) -  \f{\l_{\infty}  (r - \m)(\m r_0  + 2 \l_{\infty}  r)}{\m^2 r_0 (\m -  r_0 )} e^{-\m t} \\ & - \frac{2 \l_{\infty}  b_1}{\m r_0  (2 \m -  r_0 )} \left( b_1 + \frac{b_2 + 2\l_{\infty}  r}{r_0} + \frac{\l_{\infty}  r}{\m -  r_0 } + \f{\m}{2}\f{2 \m -  r_0 }{\m -  r_0 }\right)  e^{-  r_0 t} \\ &
		+ \frac{D_1  }{(\m -  r_0 )^2} e^{-2 r_0 t}   + \frac{2 D_2}{\m - r_0 } e^{-(\m + r_0) t} + D_3 e^{-2\m t},
	\end{align*}
	where the constants $D_1, D_2$ and $D_3$ follow from the initial conditions, i.e., the requirements that $\E N^2(0) = 0, \E \L(0) N(0) = 0$, and $\E \L^2(0) = \l_{\infty} ^2$.
	
\end{corollary}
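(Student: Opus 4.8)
The plan is to apply Proposition \ref{All_Moments} with $q=1$ and to integrate the resulting triangular $3\times3$ system coordinate by coordinate, from the bottom row upwards, substituting at each stage the first moments already found in Corollary \ref{cor:momentsLambda}.

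For $q=1$ one has $Z^{(3)}(t)=\big[\,\E\bar N^1(t),\,\E\L(t)N(t),\,\E\L^2(t)\,\big]^{\rm T}$ with $\bar N^1(t)=N(t)(N(t)-1)$, and from the definitions of the entries $d^{(\cdot)}_k$ and $b^{(\cdot)}_k$ the eigenvalues of $A_1^{(3)}$ are $d^{(2)}_0=2\m$, $d^{(1)}_1=\m+r_0$ and $d^{(0)}_2=2r_0$, while \eqref{Main_ODE_Moemnts} reads, row by row,
\begin{align*}
\frac{{\rm d}}{{\rm d}t}\,\E\L^2(t)&=-2r_0\,\E\L^2(t)+(b_2+2\l_{\infty} r)\,\E\L(t),\\
\frac{{\rm d}}{{\rm d}t}\,\E\L(t)N(t)&=-(\m+r_0)\,\E\L(t)N(t)+\E\L^2(t)+\l_{\infty} r\,\E N(t)+b_1\,\E\L(t),\\
\frac{{\rm d}}{{\rm d}t}\,\E\bar N^1(t)&=-2\m\,\E\bar N^1(t)+2\,\E\L(t)N(t).
\end{align*}
Once the lower-order moments are inserted, every forcing term on the right-hand sides is an explicit combination of $1$, $e^{-r_0t}$ and $e^{-\m t}$.

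First I would substitute the closed form of $\E\L(t)$ from Corollary \ref{cor:momentsLambda} into the first ODE and solve it by variation of constants, using the elementary rule that $x'=-ax+\sum_j c_j e^{-\g_j t}$ with $a\neq\g_j$ implies $x(t)=x(0)e^{-at}+\sum_j \frac{c_j}{a-\g_j}\big(e^{-\g_jt}-e^{-at}\big)$; this yields a constant term, an $e^{-r_0t}$ term, and a free homogeneous term $D_1 e^{-2r_0t}$, precisely the stated shape of $\E\L^2(t)$. Next I would feed this $\E\L^2(t)$, together with $\E N(t)$ and $\E\L(t)$, into the $\E\L(t)N(t)$-equation; its forcing now carries the modes $1,e^{-r_0t},e^{-2r_0t},e^{-\m t}$, so the same rule produces the constant, $e^{-r_0t}$, $e^{-2r_0t}$ and $e^{-\m t}$ contributions together with a free term $D_2 e^{-(\m+r_0)t}$. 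Finally I would insert $\E\L(t)N(t)$ into the $\E\bar N^1(t)$-equation, integrate once more (introducing the homogeneous mode $D_3 e^{-2\m t}$), and recover $\E N^2(t)=\E\bar N^1(t)+\E N(t)$. The constants $D_1,D_2,D_3$ are then fixed, in this same order, by the initial conditions $\E\L^2(0)=\l_{\infty} ^2$, $\E\L(0)N(0)=0$ and $\E N^2(0)=0$. A useful internal check is that $D_1$ must reappear inside $\E\L(t)N(t)$ with weight $1/(\m-r_0)$ and inside $\E N^2(t)$ with weight $1/(\m-r_0)^2$, and $D_2$ inside $\E N^2(t)$ with weight $2/(\m-r_0)$, exactly as the partial fractions at each integration step prescribe.

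Conceptually there is little at stake: the system is upper triangular with distinct, explicitly known eigenvalues, so each of the three steps is a one-line integration. The genuine work, and the only place where errors can creep in, is the bookkeeping of the partial-fraction coefficients $c_j/(a-\g_j)$ and of the genericity conditions under which they are finite --- namely that the rates $2\m,\m+r_0,2r_0,\m,r_0,0$ appearing be pairwise distinct wherever a difference sits in a denominator --- with the usual confluent replacement of $e^{-\g t}/(a-\g)$ by $t\,e^{-at}$ in the degenerate cases, paralleling the ``obvious adaptation when $\m=r_0$'' already flagged in Corollary \ref{cor:momentsLambda}. One could equally well compute $e^{A_1^{(3)}t}$ from \eqref{Exp_Matrix} and substitute into \eqref{General_Moments_Sol}; unwinding the triangular system is simply what makes the nested exponential structure of the answer transparent.
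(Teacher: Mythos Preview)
Your proposal is correct and follows essentially the same route as the paper: both invoke Proposition~\ref{All_Moments} with $q=1$ to obtain the same upper-triangular $3\times3$ linear ODE system for $Z^{(3)}(t)$ with the same forcing vector $A_0^{(3)}(t)$, and both then recover $\E N^2(t)=\E\bar N^1(t)+\E N(t)$. The only difference is organizational: the paper computes $e^{A_1^{(3)}t}$ in closed form via the interpolation formula~\eqref{Exp_Matrix} and plugs it into the variation-of-constants integral~\eqref{General_Moments_Sol}, whereas you unwind the triangular system one scalar ODE at a time from the bottom row upward --- exactly the alternative you yourself flag in your last sentence.
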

\begin{proof}
	To obtain all second moments we   again use Proposition \ref{All_Moments}, for $q = 1$ (with $\bar{N}^0(t) = N(t)$). This yields 
	\begin{equation}\label{Second_moments_ODE}
	\f{{\rm d}}{{\rm d}t} Z^{(3)}(t)
 =  A_1^{(3)} Z^{(3)}(t)  +  A_0^{(3)}(t),
	\end{equation}
	with 
	\begin{align*} Z^{(3)}(t) &= \begin{bmatrix}
	\E \bar{N}^1(t) \\ 
	\E \L(t) N(t) \\ 
	\E \L^2(t)
	\end{bmatrix}, \:\:\: A_1^{(3)} = \begin{bmatrix}
	- 2 \m & 2 & 0 \\
	0 & - \m -  r_0  & 1 \\
	0 & 0 & - 2  r_0 
	\end{bmatrix}, \\  A_0^{(3)}(t) &= \begin{bmatrix}
	0 \\ b_1 \E \L(t) + \l_{\infty}  r \E N(t) \\ (b_2 + 2 \l_{\infty}  r) \E \L(t)
	\end{bmatrix}.	\end{align*} 
	
	Again using Eqn. \eqref{General_Moments_Sol} for the case $q = 1$ yields the solution to ODE \eqref{Second_moments_ODE}:
	\begin{equation}\label{Second_moments_sol}
	\begin{bmatrix}
	\E \bar{N}^1(t) \\ 
	\E \L(t) N(t) \\ 
	\E \L^2(t)
	\end{bmatrix} =   e^{A_1^{(3)} t} \begin{bmatrix}
	\E \bar{N}^1(0) \\ 
	\E \L(0) N(0) \\ 
	\E \L^2(0)
	\end{bmatrix} + \int_{0}^{t} e^{A_1^{(3)} (t - s)}   A_0^{(3)}(s) {\rm d} s,
	\end{equation}
	where
	\[\begin{bmatrix}
	\E \bar{N}^1(0) \\ 
	\E \L(0) N(0) \\ 
	\E \L^2(0)
	\end{bmatrix} = \begin{bmatrix} 	0 \\  0 \\   \l^2_0	\end{bmatrix}.
	\]
	Now we only need to calculate matrix $e^{A_1^{(3)}t }$ to get the complete solution of \eqref{Second_moments_ODE}. Using \eqref{Exp_Matrix}, we get
	% \cite{van1996matrix},
%	\begin{align*}
%	e^{A_1^{(3)}t } = {\displaystyle \begin{bmatrix}
%	e^{-2 \mu t } & {\displaystyle  \frac{2   (e^{-(\mu + r_0 )t}- e^{-2 \mu t }   )}{\mu-r_0 }} & \frac{\displaystyle e^{-2 \mu t } +e^{- 2  r_0  t}-2 e^{-(\mu +r_0 )t }}{\displaystyle (\mu-r_0
%	 )^2} \\
%	&&\vspace{-2mm}\\
%	0 & e^{-(\mu + r_0  )t} & \frac{\displaystyle e^{- 2  r_0  t}-e^{-(\mu +r_0 ) )t}}{\displaystyle \mu-r_0 } \\ &&\vspace{-2mm}\\
%	0 & 0 & e^{- 2  r_0  t}
%	\end{bmatrix}}.
%	\end{align*}
	
	\begin{align*}
	e^{A_1^{(3)}t } = {\displaystyle \begin{bmatrix}
		e^{-2 \mu t } & {\displaystyle  \frac{2 e^{-\mu t}  (e^{-r_0 t}- e^{-\mu t }   )}{\mu-r_0 }} & \frac{\displaystyle (e^{-r_0 t}- e^{-\mu t })^2}{\displaystyle (\mu-r_0
			)^2} \\
		&&\vspace{-2mm}\\
		0 & e^{-(\mu + r_0  )t} & \frac{\displaystyle  e^{-r_0 t}(e^{-r_0 t} - e ^{-\mu t})}{\displaystyle \mu-r_0 } \\ &&\vspace{-2mm}\\
		0 & 0 & e^{- 2  r_0  t}
		\end{bmatrix}}.
	\end{align*}
For $\m = r_0$, this expression has to be adapted using the l'Hospital rule. Since we now know the first factorial moment $\E \bar{N}^1(t)$ and the first moment $\E N(t) $, the  relation  $\E N^2(t) = \E \bar{N}^1(t) + \E N(t)$ provides the second transient moment. We do not write the constants $D_1,D_2$ and $D_3$ explicitly, because their expressions are rather lengthy.
\end{proof}

\begin{remark}
Note that in Corollaries \ref{cor:momentsLambda} and \ref{cor:momentsN}, the speed of convergence to the stationary moments of $N(\infty)$, and of ${\rm Cov}(\Lambda(\infty),N(\infty))$, is determined by the minimum of $r_0$ and $\mu$.
\end{remark}

\begin{remark}
\label{rem:stationary}
The results in Corollaries \ref{cor:momentsLambda} and \ref{cor:momentsN} are stated under the condition that $r\neq{b_1}$. The case in which $r={b_1}$ (i.e., $r_0=0$) can be derived from those results, if one takes the limit $r\to{b_1}$. From the expressions in Corollary \ref{cor:momentsN}, note that if $r-{b_1}=r_0<0$, then $\E N(t)$ tends to infinity as $t\to\infty$. It turns out that the condition $r-{b_1}=r_0>0$ leads to finite moments, and can be interpreted as the model's stability condition. This stability condition could also have been derived a priori, by considering the Hawkes process as a branching process (as in Definition \ref{def:Hawkescluster}). From branching process theory, it is known that a `population' (or in our case, a cluster) exterminates with probability one if and only if the average number of children per parent is less than unity \cite{branchingbook}. In this case, this translates to the condition
\[
{b_1} \int_0^\infty h(s)\dif s = {b_1} \int_0^\infty e^{-rs}\dif s = \f{{b_1}}{r}<1.
\]
If this does not hold, then in the long run there will be more and more clusters, which implies that the intensity and hence the number of customers will grow indefinitely on average.  
Observe that the value of the service rate $\mu$ does not affect stability as long as $\mu > 0$. 
\end{remark}

\subsection{Stationary moments}
\label{sec:stationarymoments}
In this subsection we consider stationary moments. To this end, we assume that the queueing process is stable, i.e.\ ${b_1}<r$; cf.\ Remark \ref{rem:stationary}. Stationary moments can be easily derived from the previous section by taking the limit $t\to\infty$. Define by $\Lambda$ and $N$ the stationary versions of $\Lambda(t)$ and $N(t)$, respectively. 

\begin{corollary}
\label{cor:stationarymomentslambda}
If $\E B^g < \infty$, then
the $g^{\rm th}$ moment of the stationary Hawkes intensity is given by
\begin{equation*}\label{true_hawkes_moments}
\E \Lambda^g = \frac{1}{g r_0}\left[\mathbbm{1}_{\{g \geqslant 1\}} ~ g \l_{\infty}  r \E \Lambda^{g - 1}  + \mathbbm{1}_{\{g \geqslant 2\}}  \sum_{j = 0}^{g - 2} {{g}\choose{j}} {\E B}^{g - j} ~\E \Lambda^{j + 1}\right], ~~ g = 1, 2, 3, \dots
\end{equation*}
\end{corollary}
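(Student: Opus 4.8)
The plan is to read off the stated recursion directly from the transient moment ODE \eqref{Eq: Transiernt11} by specializing it to $z=1$ and then letting $t\to\infty$. Setting $z=1$ makes $z^{N(t)}\equiv 1$, annihilates the term carrying the prefactor $(z-1)$ as well as the term containing $N(t)$, and collapses \eqref{Eq: Transiernt11} into the scalar linear ODE
\[
\frac{{\rm d}}{{\rm d}t}\,\E\L^{g}(t) = -\,g\,r_0\,\E\L^{g}(t) + \mathbbm{1}_{\{g\geqslant 1\}}\,g\,\l_{\infty} r\,\E\L^{g-1}(t) + \mathbbm{1}_{\{g\geqslant 2\}}\sum_{j=0}^{g-2}\binom{g}{j}\E B^{\,g-j}\,\E\L^{j+1}(t),
\]
with $r_0=r-\E B$, where finiteness of $\E B^{\,g-j}$ for $0\leqslant j\leqslant g-2$ is guaranteed by the hypothesis $\E B^{g}<\infty$. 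Note that this is a \emph{triangular} system in $g$: the right-hand side only involves $\E\L^{0}(t),\dots,\E\L^{g}(t)$, and $\E\L^{g}(t)$ itself enters solely through the damping term $-g r_0\,\E\L^{g}(t)$.

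Next I would show, by induction on $g$, that $\E\L^{g}(t)$ converges to a finite limit $\E\L^{g}$ as $t\to\infty$. The cases $g=0$ (trivial) and $g=1$ (Corollary \ref{cor:momentsLambda}, using $r_0>0$) are immediate. For the inductive step, the displayed ODE for $\E\L^{g}(t)$ is linear with constant negative rate $-g r_0$ and an inhomogeneous term that converges by the induction hypothesis; variation of constants then gives convergence of $\E\L^{g}(t)$. Alternatively one can invoke Proposition \ref{All_Moments} with $q=g-1$: the matrix $A_{1}^{(q+2)}$ has eigenvalues $-d^{(q+1-k)}_{k}=-\big((q+1-k)\mu+k r_0\big)$, all strictly negative when $\mu>0$ and $r_0>0$, so the whole vector $Z^{(q+2)}(t)$ converges exponentially fast.

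Finally I would pass to the limit $t\to\infty$ in the ODE. Since $\E\L^{g}(t)$ converges and the right-hand side is an affine combination of quantities that all converge, the left-hand derivative converges as well, and a convergent function whose derivative converges must have derivative tending to $0$. Taking limits therefore yields
\[
0 = -\,g\,r_0\,\E\L^{g} + \mathbbm{1}_{\{g\geqslant 1\}}\,g\,\l_{\infty} r\,\E\L^{g-1} + \mathbbm{1}_{\{g\geqslant 2\}}\sum_{j=0}^{g-2}\binom{g}{j}\E B^{\,g-j}\,\E\L^{j+1},
\]
and dividing through by $g r_0\neq 0$ gives precisely the claimed formula.

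The only genuinely non-routine point is the interchange of the limit $t\to\infty$ with the time derivative, i.e.\ establishing $\tfrac{{\rm d}}{{\rm d}t}\E\L^{g}(t)\to 0$; this is exactly where the stability assumption $\E B=b_1<r$ (equivalently $r_0>0$) together with $\E B^{g}<\infty$ is needed, and it is settled cleanly either by the induction sketched above or by reading exponential decay off the explicit matrix exponential in Proposition \ref{All_Moments}. Everything else is bookkeeping, which is why the corollary deserves only a short proof.
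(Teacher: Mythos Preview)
Your proposal is correct and follows essentially the same route as the paper: the paper's proof is a single line, ``Considering Eqn.\ \eqref{Eq: Transiernt11} in the steady-state case and then taking $z=1$ yields the desired result,'' which is exactly your computation with the order of operations reversed (steady-state first, then $z=1$). Your version is more careful in that you actually justify why the time derivative vanishes in the limit---via the triangular structure and the negative eigenvalues of $A_1^{(q+2)}$---whereas the paper simply takes the existence of stationary moments for granted once stability is assumed.
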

\begin{proof}
Considering Eqn. \eqref{Eq: Transiernt11} in the steady-state case and then taking $z = 1$ yields the desired result.
\end{proof}

%\begin{corollary}
%	\label{cor:staedymomentsN}
%	The first and second moments of the number of customers stationary Hawkes intensity is given by,
%	\begin{align*}
%	\E N^2(t)  & =\frac{\l_{\infty}  r}{\m r_0(\m + r_0)} \left( \frac{b_2 + 2 \l_{\infty}  r}{2 r_0 } + \frac{\m(\m + r) + \l_{\infty}  r}{\m} \right), \nonumber \\
%	\E \L(t) N(t)  &= \frac{\l_{\infty}  r}{r_0 (\m + r_0)} \left( \frac{b_2 + 2 \l_{\infty}  r}{2 r_0 } + \frac{\l_{\infty}  r + \m b_1}{\m} \right) ,\\
%   \E N(t)  &= \frac{\l_{\infty}  r}{\m r _0}.
%	\end{align*}
%\end{corollary}

\begin{corollary}
Let $b_1$ and, where needed, $b_2$ be finite.
The stationary means and (co-)variances of $\Lambda$ and $N$ are given by:
\begin{align*}
\E \Lambda &= \frac{\l_{\infty}  r}{r _0}, \:\:\ \E N = \frac{\l_{\infty}  r}{\m r _0}, \:\:\
\mathbb{V}{\rm ar}(\L)=  \frac{\l_{\infty}  r b_2}{2r_0^2}, \\ \mathbb{V}{\rm ar}(N) &=\frac{\l_{\infty}  r}{2 r_0 ^2} \f{b_2  + 2(\m + r) r_0 }{\m(\m +  r_0 )},\:\ ~\mbox{and}~\:\ \mathbb{C}{\rm ov}(N, \L) = \f{\l_{\infty}  r}{2r_0^2}\f{b_2 + 2 r_0  {b_1}}{\m +  r_0 } .
\end{align*} 

%\begin{align*}
%\mathbb{C}{\rm ov}(N, \L) &= \f{\l_{\infty}  r}{2r_0^2}\f{b_2 + 2 r_0  {b_1}}{\m +  r_0 },\:\:\:
%\mathbb{V}{\rm ar}(N) =\frac{\l_{\infty}  r}{2 r_0 ^2} \f{b_2  + 2(\m + r) r_0 }{\m(\m +  r_0 )},\:\:\
%\mathbb{V}{\rm ar}(\L)=  \frac{\l_{\infty}  r b_2}{2r_0^2}\\
% \E N &= \frac{\l_{\infty}  r}{\m r _0}, 
% ~\mbox{and}~
%  \E \Lambda = \frac{\l_{\infty}  r}{r _0}.
%\end{align*} 

In particular, the correlation coefficient of $N$ and $\L$ is given by
\[
\rho(N, \L) = \f{b_2 + 2 r_0  {b_1}}{\sqrt{b_2 (b_2  + 2 r_0 (\m + r))}}\sqrt{\f{\m}{\m +  r_0 }} .
\]
\end{corollary}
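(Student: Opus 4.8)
The plan is to obtain all five quantities by taking $t\to\infty$ in the transient formulas of Corollaries \ref{cor:momentsLambda} and \ref{cor:momentsN}, using the stability assumption $b_1<r$ (equivalently $r_0>0$), which guarantees that every exponential term $e^{-r_0 t}$, $e^{-\mu t}$, $e^{-2r_0 t}$, $e^{-(\mu+r_0)t}$, $e^{-2\mu t}$ vanishes in the limit, leaving only the constant (equilibrium) parts. First I would read off $\E\L=\lambda_\infty r/r_0$ and $\E N=\lambda_\infty r/(\mu r_0)$ directly from Corollary \ref{cor:momentsLambda}; alternatively these follow from Corollary \ref{cor:stationarymomentslambda} with $g=1$ (giving $\E\L=\lambda_\infty r/r_0$) together with the $t\to\infty$ limit of the ODE for $\E N(t)$, whose stationary version $\mu\,\E N=\E\L$ is immediate from \eqref{Eq: Transiernt2} with $g=0$, $q=0$.

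Next I would extract the second moments. Taking $t\to\infty$ in the expression for $\E\L^2(t)$ gives $\E\L^2=\lambda_\infty r(b_2+2\lambda_\infty r)/(2r_0^2)$, whence $\Var(\L)=\E\L^2-(\E\L)^2=\lambda_\infty r b_2/(2r_0^2)$ after the $(\lambda_\infty r)^2/r_0^2$ terms cancel. Similarly the limit of $\E\L(t)N(t)$ yields $\E\L N=\dfrac{\lambda_\infty r}{r_0(\mu+r_0)}\left(\dfrac{b_2+2\lambda_\infty r}{2r_0}+\dfrac{\lambda_\infty r+\mu b_1}{\mu}\right)$, and then $\Cov(N,\L)=\E\L N-\E N\,\E\L$; the bookkeeping here is that the cross-term $\E N\,\E\L=\lambda_\infty^2 r^2/(\mu r_0^2)$ must combine with the $\lambda_\infty r+\mu b_1$ piece so that the $\lambda_\infty^2 r^2$ contributions collapse to $b_2+2r_0 b_1$ over the common denominator, producing $\Cov(N,\L)=\dfrac{\lambda_\infty r}{2r_0^2}\dfrac{b_2+2r_0 b_1}{\mu+r_0}$. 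The same procedure applied to $\E N^2(t)$ and the identity $\Var(N)=\E N^2-(\E N)^2$ gives $\Var(N)=\dfrac{\lambda_\infty r}{2r_0^2}\dfrac{b_2+2(\mu+r)r_0}{\mu(\mu+r_0)}$.

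Finally the correlation coefficient is just $\rho(N,\L)=\Cov(N,\L)/\sqrt{\Var(N)\Var(\L)}$; substituting the three formulas, the factors $\lambda_\infty r/(2r_0^2)$ cancel between numerator and the square root in the denominator, leaving $\rho(N,\L)=\dfrac{b_2+2r_0 b_1}{\sqrt{b_2}\,\sqrt{\,b_2+2(\mu+r)r_0\,}}\,\sqrt{\dfrac{\mu}{\mu+r_0}}$, as claimed. As an independent check (and a cleaner route avoiding the unwritten constants $D_1,D_2,D_3$), one can instead set the left-hand side $\frac{\rm d}{{\rm d}t}$ to zero in the stationary versions of \eqref{Eq: Transiernt2} for the index pairs $(g,q)\in\{(2,0),(1,1),(0,2)\}$ and solve the resulting linear system for $\E\L^2$, $\E\L N$, $\E N^2$ in terms of $\E\L$ and $\E N$; this is the approach I would actually write out, since it sidesteps the transient integration entirely.

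The only mildly delicate point is the algebraic cancellation in $\Cov(N,\L)$ and $\Var(N)$ — one must be careful that the subtracted product of means exactly removes the $\lambda_\infty^2 r^2$ terms and leaves expressions linear in $b_2$ and $b_1$; everything else is routine limit-taking. The case $r_0=0$ (i.e.\ $r=b_1$) is excluded since we assume stability, and the sub-cases $\mu=r_0$ requiring l'Hôpital in the transient formulas do not affect the stationary limits, which are continuous in the parameters.
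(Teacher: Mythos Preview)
Your proposal is correct and follows exactly the paper's approach: the paper's proof is the single line ``Follows directly from taking $t\to\infty$ in Corollaries \ref{cor:momentsLambda} and \ref{cor:momentsN},'' and you have spelled out precisely that limit-taking together with the routine algebraic cancellations. Your alternative suggestion of solving the stationary linear system obtained by setting the time derivatives in \eqref{Eq: Transiernt2} to zero is a perfectly valid (and arguably cleaner) variant that the paper does not mention, but it leads to the same answers by the same underlying mechanism.
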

\begin{proof}
Follows directly from taking $t\to\infty$ in Corollaries \ref{cor:momentsLambda} and \ref{cor:momentsN}. 
\end{proof}

\section{A non-Markovian Hawkes-fed infinite-server queue}
\label{sec:nonmarkov}
In this section we allow the excitation function $h$ to be a general nonnegative function, and we allow $J$ to be any {nonnegative} random variable. 
%We denote this model by $\textrm{Hawkes}/G/\infty$. 
We derive a fixed-point equation for the $z$-transform of $N(t)$  in Theorem \ref{prop:N}, which  can be iterated so as to obtain an (increasingly accurate) approximation; Proposition \ref{prop:upperlower} derives error bounds of the resulting approximation scheme.

%{\subsection{Cluster analysis}}\footnote{{Since now this is the only subsection in this section, we might want to delete this.}}
%\label{sec:cluster}
This section relies heavily on the representation of a Hawkes process as a branching process (recall Definition \ref{def:Hawkescluster}). Immigrants arrive according to a Poisson process with rate $\l_{\infty} $. Each of those immigrants increases the future arrival rates. The arrivals that occur due to this increase, are called the {\it children} of the immigrant. In turn, those children are potentially the parents of a next generation, and so forth. Since the customers enter an infinite-server queue, each of them is served independently of the rest, with i.i.d.\ service requirements (distributed as a nonnegative random variable $J$). 

Note that, by definition, each parent produces children independently from all other parents at a rate $B h(u)$ at time $u$ after its own birth, conditional on its own arrival rate increment $B$. Let $S(u)$ denote the number of children of a parent, $u$ time units after its own birth, including the parent itself if it is still being served. Let its probability generating function be denoted by $\eta(u,z):= \E z^{S(u)}$, for $0\leqslant u\leqslant t$. Then the number of jobs in the system at time $t$, $N(t)$ with $N(0)=0$, satisfies 
\begin{equation}
\label{eq:cluster}
\E z^{N(t)} = \sum_{n=0}^\infty\f{(\l_{\infty}   t)^n}{n!}e^{-\l_{\infty}   t} \left(\f1t\int_0^t \eta(u,z)\dif u\right)^n = \exp\left(\l_{\infty}   \int_0^t (\eta(u,z)-1)\dif u\right),
\end{equation}
which follows by conditioning on the number of immigrants (i.e., the number of clusters). The next step is to identify $\eta(u,z)$, by studying each cluster separately.
First consider the distributional equality, for $0\leqslant u\leqslant t$,
\begin{equation}
\label{eq:cdf}
S(u)\stackrel{\textrm{d}}{=} 1_{\{J>u\}} + \sum_{i=1}^{K(u)}S^{(i)}(u-t_i),
\end{equation}
where $S(u)\stackrel{\textrm{d}}= S^{(i)}(u)$ for all $i$, $K(\cdot)$ is an inhomogeneous Poisson counting process with rate $B_i h(\cdot)$ (conditional on $B_i$) that counts the number of children, and $t_1,t_2,\ldots$ are the birth times of the corresponding children. Note that $S^{(i)}(u)$ can be interpreted as the number of children of child $i$ (including itself if it is still in the system, $u$ time units after its birth). Denote by $P_t(s)$ the probability that, {conditional} on the fact that a child was born before time $t$, it was already born before time $s$. Then it holds that {
\begin{align*}
P_t(s)&=\f{\Pb(K(s)=1,K(t)-K(s)=0)}{\Pb(K(t)=1)}
=\f{e^{-\int_0^s B h(u)\dif u}\int_0^s B h(u)\dif u e^{-\int_s^t B h(u)\dif u}}{e^{-\int_0^t B h(u)\dif u}\int_0^t B h(u)\dif u} = \f{\int_0^s h(u)\dif u}{\int_0^t h(u)\dif u}.
\end{align*}}
Defining {$H(u):=\int_0^u h(v)\dif v$}, the corresponding PDF is thus given by{
\[
p_{ u}(s) := P'_u(s) =  \f{h(s)}{\int_0^u h(v)\dif v} = \f{h(s)}{H(u)}.
\]}
Define ${\mathscr J}(u):={\mathbb P}(J > u)$ and $\bar {\mathscr J}(u):=1 - {\mathscr J}(u)$. Then it follows that 
\begin{align*}
&\eta(u,z)=\E_B\left[\sum_{n=0}^\infty \E[z^{S(u)}|K({ u})=n,B]\Pb(K({ u})=n|B)\right]\\
&=( \bar {\mathscr J}(u)+{\mathscr J}(u)z)  \E_B\left[\sum_{n=0}^\infty \E[z^{S^{(1)}(u)}]^n e^{-B H({ u})}\f{B^n H({ u})^n}{n!}\right]\\
&=( \bar {\mathscr J}(u)+{\mathscr J}(u)z) \E_B\left[\sum_{n=0}^\infty \left(\int_0^u p_{ u}(s)  \eta(u-s,z) \dif s\right)^n e^{-B H({ u})}\f{B^n H({ u})^n}{n!}\right]\\
&=( \bar {\mathscr J}(u)+{\mathscr J}(u)z) \E_B \exp\left(B \int_0^u h(s)( \eta(u-s,z)-1)\dif s\right).
\end{align*} 
Recognizing the Laplace-Stieltjes transform of $B$, we can write
\begin{equation}
\label{eq:fixedpoint}
 \eta(u,z)=( \bar {\mathscr J}(u)+{\mathscr J}(u)z) \beta\left(\int_0^u h(s)(1- \eta(u-s,z))\dif s\right).
\end{equation}
Let ${\mathscr G}$ be the class of all time-dependent $z$-transforms $f(u,z):=\E z^{X(u)}$, where $X(u)$, for $u\in[0,t]$, is a nonnegative discrete random variable living on the integers, with a possibly defective distribution (i.e., the probabilities may sum to strictly less than one). For $f\in{\mathscr G}$, define the functional $\phi$ by
\begin{equation}
\label{eq:functional}
\phi(f)(u,z)=( \bar {\mathscr J}(u)+{\mathscr J}(u)z) \beta\left(\int_0^u h(s)(1- f(u-s,z))\dif s\right).
\end{equation}
With this definition, Eqn.\ \eqref{eq:fixedpoint} can be summarized as $\eta=\phi(\eta)$, i.e., $\eta$ can be seen as a fixed point of $\phi$. We stress that the following result involves \textit{complex} $z$. This is important, because for numerical analysis in Section~\ref{sec:num} we need to iterate $\phi$ a finite number of times for a particular set of complex-valued arguments. Furthermore, we will use the following notation. For a generic $f_0\in\mathscr{G}$ we define $f_{n+1}:=\phi(f_n)$, for $n=0,1,\ldots$.

\begin{theorem}
\label{prop:N}
Suppose that $\E B\int_0^\infty h(s)\dif s<\infty$. Let $z\in\mathbb{C}$, such that $|z|<1$, be fixed. Then the following holds:
\begin{enumerate}
\item[(i)] the $z$-transform of $N(t)$ is given by Eqn.\ \eqref{eq:cluster};
\item[(ii)] the function $\eta$, defined by $\eta(u,z)=\E z^{S(u)}$ for $u\in[0,t]$ is the unique fixed point of $\phi$, i.e.\ $\phi(\eta)=\eta$, with $\phi$ defined in Eqn.\ \eqref{eq:functional};
\item[(iii)] the sequence $(f_n)_n$ has the property that $f_n\in\mathscr{G}$ for every $n$ if $f_0\in\mathscr{G}$;
\item[(iv)] $f_n$ converges pointwise to $\eta\in\mathscr{G}$, regardless of the initial approximation $f_0\in\mathscr{G}$.
\end{enumerate} 
\end{theorem}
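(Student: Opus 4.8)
The plan is to establish the four claims more or less in the order stated, since each builds on the previous one. For (i), I would make rigorous the heuristic conditioning argument already sketched in the text: the immigrant events form a Poisson process of rate $\l_{\infty}$ on $[0,t]$, so conditionally on there being $n$ immigrants, their arrival times are i.i.d.\ uniform on $[0,t]$; each immigrant independently generates a cluster, and the contribution of a cluster whose root arrives at time $v$ to $N(t)$ is distributed as $S(t-v)$ with the $S$'s independent across clusters. Averaging $z^{N(t)}$ over the uniform root times gives the factor $\frac1t\int_0^t\eta(u,z)\,\dif u$ per cluster, and then summing the Poisson series yields Eqn.\ \eqref{eq:cluster}; the finiteness assumption $\E B\int_0^\infty h(s)\dif s<\infty$ guarantees (via the subcriticality of the branching process, cf.\ Remark \ref{rem:stationary}) that each cluster is a.s.\ finite so $N(t)<\infty$ a.s.\ and the generating-function manipulations are legitimate for $|z|\le 1$.

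For (ii), the defining distributional recursion \eqref{eq:cdf} together with the computation already carried out in the excerpt shows $\eta=\phi(\eta)$; what remains is \emph{uniqueness} of the fixed point within $\mathscr G$. Here I would set up a contraction-type estimate. Given two fixed points $f,g\in\mathscr G$, write $\delta(u):=\sup_{|z|\le 1}|f(u,z)-g(u,z)|$; using that $|\beta(x)-\beta(y)|\le \E B\,|x-y|$ for arguments with nonnegative real part (since $\beta$ is the LST of a nonnegative random variable and $|\beta'|\le\E B$ there), and that $|\bar{\mathscr J}(u)+{\mathscr J}(u)z|\le 1$, the functional $\phi$ applied to $f$ and $g$ satisfies
\[
|\phi(f)(u,z)-\phi(g)(u,z)|\le \E B\int_0^u h(s)\,\delta(u-s)\,\dif s .
\]
Iterating this bound $n$ times against itself produces a factor $\big(\E B\int_0^u h(s)\,\dif s\big)^n/?$-type control; more carefully, a Grönwall / Picard–Lindelöf argument on the finite interval $[0,t]$ (the kernel $\E B\,h$ is integrable, so its $n$-fold convolution evaluated at a fixed $t$ tends to $0$) forces $\delta\equiv 0$. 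This is the step I expect to be the main obstacle: making the convolution-iteration bookkeeping clean enough to conclude uniqueness, and being careful that $h$ need only be locally integrable so one works on the compact interval $[0,t]$ rather than all of $[0,\infty)$.

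For (iii), I would verify that $\phi$ maps $\mathscr G$ into itself: if $f(u,\cdot)$ is the (possibly defective) p.g.f.\ of a nonnegative integer random variable, then for each fixed $u$, $\int_0^u h(s)(1-f(u-s,z))\,\dif s$ has nonnegative real part when $|z|\le 1$ and reduces to a nonnegative real number when $z\in[0,1]$, so $\beta(\cdot)$ evaluated there is again a p.g.f.\ (a random sum / compound structure), and multiplying by $\bar{\mathscr J}(u)+{\mathscr J}(u)z$ — itself the p.g.f.\ of a Bernoulli$({\mathscr J}(u))$ variable — keeps us in $\mathscr G$; the ``possibly defective'' clause is exactly what lets the class be closed without worrying about mass escaping. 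Claim (iii) then follows by induction on $n$. Finally, for (iv): by (iii) the sequence $(f_n)$ stays in $\mathscr G$, and the same Lipschitz estimate as in (ii), now comparing $f_n$ with the true fixed point $\eta$, gives $\sup_{|z|\le1}|f_{n}(u,z)-\eta(u,z)|\le \E B\int_0^u h(s)\sup_{|z|\le1}|f_{n-1}(u-s,z)-\eta(u-s,z)|\,\dif s$; since the initial discrepancy is bounded by $2$ uniformly on $[0,t]$, iterating yields a bound by $2$ times the $n$-fold convolution of the integrable kernel $\E B\,h$ evaluated on $[0,t]$, which tends to $0$ as $n\to\infty$. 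Hence $f_n\to\eta$ pointwise (indeed uniformly in $z$ on the closed unit disk and in $u$ on $[0,t]$), regardless of $f_0\in\mathscr G$. I would remark in passing that this uniform convergence is what underpins the error bounds promised in Proposition \ref{prop:upperlower}.
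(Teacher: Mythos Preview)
Your plan is correct and rests on the same core device as the paper's proof: a Picard-iteration/Lipschitz estimate for the operator $\phi$, using $|\beta(x)-\beta(y)|\le \E B\,|x-y|$ together with $|\bar{\mathscr J}(u)+{\mathscr J}(u)z|\le 1$. The differences are organizational rather than conceptual. First, the paper argues (iii) probabilistically, by introducing an auxiliary operator $\tilde\phi$ on (possibly defective) CDFs and observing that $\phi(f_0)$ is the $z$-transform of $\tilde\phi(F_0)$; you instead read off the compound-Poisson/mixture structure directly, which is equivalent. Second, the paper compares two \emph{arbitrary} iterate sequences $(f_n),(g_n)$ and shows $|f_n(u,z)-g_n(u,z)|\le (Cu)^n/n!$ with $C=2\,\E B\int_0^\infty h$, obtaining a common limit that is then identified as a fixed point by continuity of $\phi$; uniqueness falls out at the end. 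You reverse this: uniqueness first via the convolution/Gr\"onwall bound, then (iv) by comparing $f_n$ directly with the already-known fixed point $\eta$. Third, the paper works first for real $z\in(-1,1)$ and upgrades to complex $|z|<1$ via \cite[Thm.~5]{whittfuncinverse}, whereas your Lipschitz bound is valid on the right half-plane and so handles complex $z$ from the outset. One point to make explicit in your write-up: the claim that the $n$-fold convolution of the integrable kernel $\E B\,h$ tends to $0$ on $[0,t]$ is exactly the quasi-nilpotence of the Volterra convolution operator, and it holds without assuming $\E B\int_0^\infty h<1$; a one-line Laplace/Chernoff argument (choose $s$ large enough that $\E B\int_0^\infty e^{-su}h(u)\,\dif u<1$ and bound $\int_0^t (\E B\,h)^{\star n}\le e^{st}\big(\E B\int_0^\infty e^{-su}h(u)\,\dif u\big)^n$) closes this cleanly.
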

\begin{proof}
First fix $-1<z<1$, where $z$ is a real number. Note that (i) and $\phi(\eta)=\eta$ (cf.\ Eqn.\ \eqref{eq:fixedpoint}) have already been proven. To prove (ii), it remains to show uniqueness, which will be shown after the proof of (iii) and (iv). It holds that $f_0\in{\mathscr G}$ implies $f_1=\phi(f_0)\in{\mathscr G}$, following a similar probabilistic reasoning as in e.g.\  \cite[Thm.\ 1]{whittfuncinverse}. Indeed, consider the related operator $\tilde{\phi}$, which maps a possibly defective CDF $F$ to
\begin{equation}
\label{eq:tildephi}
\tilde\phi(F)(u,k)=\Pb\left(1_{\{J>u\}} + \sum_{i=1}^{K(u)} S^{(i)}(u-t_i)\leqslant k\right),\quad 0\leqslant u\leqslant t,
\end{equation}
cf.\ also Eqn.\ \eqref{eq:cdf}, where the $S^{(i)}$ are i.i.d.\ with CDF $F$. Let $f_0\in{\mathscr G}$ be the $z$-transform of a possibly defective CDF $F$. Note that then $\phi(f_0)$ is the $z$-transform of the CDF $\tilde\phi(F)$, by construction of $\phi$ and $\tilde\phi$. Statement (iii) now follows by induction.

\vb

Now we show that, for arbitrary $f_0,g_0\in\mathscr{G}$, $f_n$ and $g_n$ have a limit, which is in fact the same for both sequences. To do this, we show by induction that for every $n\in\mathbb{N}$,
\begin{equation}
\label{eq:induction}
|f_n(u,z)-g_n(u,z)|{\leqslant} \f{1}{n!}(Cu)^n,\quad 0{\leqslant} u{\leqslant} t.
\end{equation}
We first prove the base case $n=1$, i.e.\ for every $0{\leqslant} u{\leqslant} t$,
\begin{align}
\label{eq:chain}
\nonumber |f_1(u,z)-g_1(u,z)|&\stackrel{(1)}{\leqslant}\E\left|e^{-B\int_0^u h(s)(1-f_0(u-s,z))\dif s}-e^{-B\int_0^u h(s)(1-g_0(u-s,z))\dif s}\right|\\
&\stackrel{(2)}{\leqslant} \E B \int_0^u h(u-s)|f_0(s,z)-g_0(s,z)|\dif s\\
&\nonumber \stackrel{(3)}{\leqslant} \E B \int_0^u h(s)\dif s \int_0^u|f_0(s,z)-g_0(s,z)|\dif s\stackrel{(4)}{\leqslant} C u,
\end{align}
with $C=2\E B \int_0^\infty h(s)\dif s$, which is finite by assumption. Inequality (1) follows from Eqn.~\eqref{eq:functional} and $|\bar{\mathscr J}(u)+{\mathscr J}(u)z|\leqslant 1$ for $|z|<1$. Inequality (2) follows by the mean-value theorem, which implies that for $x,y\in\R$, $e^x-e^y = (x-y)e^{z(x,y)}$, where $z(x,y)$ takes a value between $x$ and $y$. In our application, the exponent on the right-hand side can be bounded from above by $1$, since $f_0,g_0\in\mathscr{G}$ implies that $f_0,g_0\leqslant 1$, and hence the exponential is bounded by $e^0=1$. Finally, inequality (3) follows from Young's inequality for convolutions \cite[Thm.  3.9.4]{Young}, and (4) from $|f_0-g_0|\leqslant 2$, for $f_0,g_0\in\mathscr{G}$, and $0\leqslant\int_0^u h(s)\dif s \leqslant \int_0^\infty h(s)\dif s$ since $h\geqslant 0$. Now consider the inductive step: if Eqn.\ \eqref{eq:induction} holds for some $n\in\mathbb{N}$, then by the same reasoning as in Eqn. \eqref{eq:chain}, for $0\leqslant u\leqslant t$,
\begin{align*}
|f_{n+1}(u,z)-g_{n+1}(u,z)|&{\leqslant} \E B\int_0^u h(s)\dif s \int_0^u |f_n(s,z)-g_n(s,z)|\dif s\\
&{\leqslant} C \f{1}{n!} \int_0^u (Cs)^n \dif s = \f{1}{(n+1)!} (Cu)^{n+1}.
\end{align*}
It follows that Eqn.\ \eqref{eq:induction} holds for every $n\in\mathbb{N}$. As $n$ tends to infinity, we can conclude that $f_n$ and $g_n$ have the same limit $f$, regardless of the initial approximations $f_0,g_0$. Moreover, it turns out that the limit $f$ is an element of $\mathscr{G}$, which follows from $f_n\in\mathscr{G}$ for every $n$ in combination with L\'evy's convergence theorem \cite[Ch. XVIII]{WILL}. From the definition of $\phi$, it is easy to see that $\phi$ is a continuous operator, and hence
\[
f= \lim_{n\to\infty} f_{n+1} =\lim_{n\to\infty} \phi(f_n) = \phi\Big(\lim_{n\to\infty} f_n\Big) = \phi(f), 
\] 
thus the limit $f\in\mathscr{G}$ is a fixed point of $\phi$, and hence (iv) is proved. The above now also immediately yields uniqueness of the fixed point of $\phi$, which finishes the proof of (ii). Indeed, suppose that $\eta_0,\tilde \eta_0\in\mathscr{G}$ are both fixed points of $\phi$, then
\[
\eta_0=\lim_{n\to\infty}\eta_n=\lim_{n\to\infty}\tilde\eta_n =\tilde\eta_0.
\] 
Finally, we need that the above not only holds for real $z$, but also complex $z$ such that $|z|<1$. The conditions in \cite[Thm. 5]{whittfuncinverse} hold\footnote{Technically these theorems are about Laplace-Stieltjes transforms, but they can be adapted to a result for $z$-transforms by applying a variable transform.}, so that we can conclude that $f_n\to\eta$, for all \textit{complex} $z$ with $|z|<1$, regardless of the initial approximation $f_0\in{\mathscr G}$. 
\end{proof}

The iterates of $z$-transforms can be used to construct lower and upper bounds on the CDF of $S(u)$. We write $[n]$ in the superscript of a mapping to denote an $n$-fold application of the mapping. Define $F^\ast\equiv 1$, $F_\ast\equiv 0$, $F^{n\ast}=\tilde\phi^{[n]}(F^\ast)$ and $F_{n\ast} = \tilde\phi^{[n]}(F_\ast)$.

\begin{proposition}
\label{prop:upperlower}
For every $0\leqslant u\leqslant t$ and for all $n,k\in\N$,
\[
1=F^\ast(u,k)\geqslant F^{n\ast}(u,k)\geqslant F^{(n+1)\ast}(u,k)\geqslant F(u,k)\geqslant F_{(n+1)\ast}(u,k)\geqslant F_{{n}\ast}(u,k)\geqslant F_{\ast}(u,k) =0,
\]
where $F_{n\ast}$ and $F^{n\ast}$ are the (defective) CDF\,s associated with $\phi^{[n]}(z\mapsto 1)$ and $\phi^{[n]}(z\mapsto 0)$, respectively.
\end{proposition}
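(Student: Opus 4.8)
The plan is to reduce everything to one structural fact: the operator $\tilde\phi$ of Eqn.\ \eqref{eq:tildephi} is \emph{monotone} for the pointwise partial order on (possibly defective) CDFs on $\N$ — if $F(v,k)\leqslant G(v,k)$ for all $v\in[0,t]$ and $k\in\N$, then $\tilde\phi(F)(u,k)\leqslant\tilde\phi(G)(u,k)$ for all such $u,k$. Two further ingredients are already available. First, writing $F(u,k):=\Pb(S(u)\leqslant k)$, the CDF $F$ is a fixed point of $\tilde\phi$: taking $\Pb(\,\cdot\leqslant k)$ on both sides of the distributional identity \eqref{eq:cdf} and comparing with \eqref{eq:tildephi} gives $\tilde\phi(F)=F$. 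Second, from the proof of Theorem \ref{prop:N}(iii) the maps $\tilde\phi$ (on CDFs) and $\phi$ (on $z$-transforms) are intertwined by the $z$-transform; since $z\mapsto 1$ is the transform of the unit mass at $0$ (CDF $\equiv 1 = F^\ast$ on $\N$) and $z\mapsto 0$ that of the fully defective law (CDF $\equiv 0 = F_\ast$), the CDFs $\tilde\phi^{[n]}(F^\ast)$ and $\tilde\phi^{[n]}(F_\ast)$ are exactly those associated with $\phi^{[n]}(z\mapsto 1)$ and $\phi^{[n]}(z\mapsto 0)$, so the parenthetical identification in the statement is automatic.

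To get the monotonicity I would couple. If $F(v,\cdot)\leqslant G(v,\cdot)$ then the generalised inverses satisfy $F^{-1}(v,\cdot)\geqslant G^{-1}(v,\cdot)$ pointwise, so on a single probability space take i.i.d.\ uniforms $(U_i)_{i\geqslant1}$, independent of the branching data $(J,K(u),(t_i)_i)$, and set $S^{(i),F}(v):=F^{-1}(v,U_i)$, $S^{(i),G}(v):=G^{-1}(v,U_i)$, whence $S^{(i),F}(v)\geqslant S^{(i),G}(v)$ almost surely, simultaneously over $i$ and $v$. Then
\[
1_{\{J>u\}}+\sum_{i=1}^{K(u)}S^{(i),F}(u-t_i)\ \geqslant\ 1_{\{J>u\}}+\sum_{i=1}^{K(u)}S^{(i),G}(u-t_i)\qquad\text{a.s.},
\]
and applying $\Pb(\,\cdot\leqslant k)$ yields $\tilde\phi(F)(u,k)\leqslant\tilde\phi(G)(u,k)$. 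Keeping the branching data common while coupling only the offspring tallies is the one delicate step; I expect this to be the main (though modest) obstacle, the remainder being a routine induction.

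Granting monotonicity, the proof finishes as follows. Every (possibly defective) CDF $H$ on $\N$ obeys $F_\ast\equiv0\leqslant H\leqslant 1\equiv F^\ast$; in particular $\tilde\phi(F_\ast)\geqslant F_\ast$ and $\tilde\phi(F^\ast)\leqslant F^\ast$, because $\tilde\phi$ returns a CDF. Iterating $\tilde\phi$ and using monotonicity, a one-line induction shows $(F^{n\ast})_n$ is pointwise nonincreasing and $(F_{n\ast})_n$ pointwise nondecreasing in $n$. Next, starting from $F_\ast\leqslant F\leqslant F^\ast$ and applying $\tilde\phi^{[n]}$, monotonicity together with $\tilde\phi(F)=F$ gives $F_{n\ast}\leqslant F\leqslant F^{n\ast}$ for every $n\geqslant0$. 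Chaining these inequalities, for all $0\leqslant u\leqslant t$ and $n,k\in\N$,
\[
1=F^\ast(u,k)\geqslant F^{n\ast}(u,k)\geqslant F^{(n+1)\ast}(u,k)\geqslant F(u,k)\geqslant F_{(n+1)\ast}(u,k)\geqslant F_{n\ast}(u,k)\geqslant F_\ast(u,k)=0,
\]
where the inner two inequalities are the sandwich bound at index $n+1$ and the outer ones the monotonicity of the two sequences. Rephrasing $F^{n\ast},F_{n\ast}$ via the $z$-transform intertwining noted above expresses them through $\phi^{[n]}(z\mapsto1)$ and $\phi^{[n]}(z\mapsto0)$, completing the proof; by Theorem \ref{prop:N}(iv) both bounding sequences converge pointwise to $F$, so the bounds are asymptotically tight.
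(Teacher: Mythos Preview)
Your proof is correct and follows essentially the same route as the paper: both establish monotonicity of $\tilde\phi$ via a coupling of the offspring variables (you make the quantile coupling explicit, the paper just asserts such a construction), and then deduce the chain of inequalities from monotonicity together with $\tilde\phi(F)=F$. The only cosmetic difference is that the paper outsources the induction/sandwich step to \cite[Thm.~8]{whittfuncinverse}, whereas you carry it out directly.
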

\begin{proof}
Due to  \cite[Thm. 8]{whittfuncinverse}, it is sufficient to show that $\tilde{\phi}$ as defined in Eqn.\ \eqref{eq:tildephi}, regarded as an operator mapping possible defective CDF\,s into possibly defective CDF\,s is monotone in the stochastic ordering. In other words, if we define $F_1\leqslant_{\textrm{st}}F_2$ to mean $F_1(k)\geqslant F_2(k)$ for all $k\in\N$, then we need to show that $F_1\leqslant_{\textrm{st}}F_2$ implies $\tilde\phi(F_1)\leqslant_{\textrm{st}}\tilde\phi(F_2)$. If $F_1\leqslant_{\textrm{st}}F_2$, then we can construct i.i.d.\ random variables $\{S_j^{(i)}:i\geqslant1\}$, for $j=1,2$, such that $S_j^{(1)}$ has CDF $F_j$, for $j=1,2$, and $S^{(i)}_1\leqslant S^{(i)}_2$ for each $i$. Hence, for every $0\leqslant u\leqslant t$,
\[
1_{\{J>u\}} + \sum_{i=1}^{K(u)} S_1^{(i)}(u-t_i) \leqslant 1_{\{J>u\}} + \sum_{i=1}^{K(u)} S_2^{(i)}(u-t_i)
\]
with probability one, which implies that, for every $0\leqslant u\leqslant t$,
\begin{align*}
    \tilde\phi(F_1)(u,k)&=\Pb\left(1_{\{J>u\}} + \sum_{i=1}^{K(u)} S_1^{(i)}(u-t_i)\leqslant k\right)\\
&\geqslant\Pb\left(1_{\{J>u\}} + \sum_{i=1}^{K(u)} S_2^{(i)}(u-t_i)\leqslant k\right) = \tilde\phi(F_2)(u,k),
\end{align*}
for all $k$. The result follows.
\end{proof}

\begin{remark}
\label{rem:alg}
We start in the extremal functions $0$ and $1$ in $\mathscr{G}$, to ensure that the associated (defective) CDFs provide an upper and lower bound to the CDF of the solution. However, this gives the worst possible bounds and leaves some room for improvement. To save computation time, another practical issue is to find a good initial approximation $f_0\in\mathscr{G}$. One may for instance use the approximation $e^x\approx 1+x$ in Eqn.\ \eqref{eq:functional}. For example, in the case of exponential jobs, the resulting functional equation is solvable using techniques from \cite[Chapter I]{tricomi1957integral}. The solution will provide a better initial approximation than simply the function identical to 1 or 0. In this paper, however, we decide not to pursue these issues any further.
\end{remark}

\section{Numerics and simulations}
\label{sec:num}
The purpose of this section is to show the practical applicability of the tools developed in the preceding sections. At the same time, this exercise verifies the main distributional results of this paper. To be more precise, we apply the Markovian method (as developed in Section \ref{sec:markov}) and the cluster-based approach  (as developed in Section \ref{sec:nonmarkov}). In order to be able to compare the results of both methods, we  work with the more restrictive assumptions mentioned in the beginning of Section \ref{sec:markov}.

\vb

Some comments about both approaches are in order. Firstly, to use Theorem \ref{thm:zeta} for numerical results, we need to numerically invert the transform in Eqn.\ \eqref{eq:zeta}. We do this by using a fast Fourier inversion algorithm, called \textsc{Poisson}, which was published in \cite{ztrans}. This algorithm requires the evaluation of Eqn.\ \eqref{eq:zeta} for complex values. In each instance, this leads to solving an ODE with a complex boundary, which can be accomplished by standard ODE solvers. Secondly, with regards to the functional equation \eqref{eq:functional}, in case that $J=\infty$ (i.e., for the usual Hawkes process without departures), \cite{HawkesOakes} commented that such relations are `rather intractable'. Also in \cite{Gao2016}, it is mentioned that the problem of finding the probability mass function of the number of customers is generally `numerically challenging'. We will now show,    however,  that Eqn.\ \eqref{eq:functional} is in fact tractable. Indeed, we again use the \textsc{Poisson} algorithm for the branching process approach, for which we need to evaluate Eqn.\ \eqref{eq:cluster} for complex values. Since $\eta$ is only implicitly known, the idea is now that for each complex argument required by \textsc{Poisson}, we iterate Eqn.\ \eqref{eq:functional} a number of times, until it is `close enough' to its fixed point. We have verified that convergence occurs for complex $z$ in Section \ref{sec:nonmarkov}, cf.\ Theorem \ref{prop:N}. Furthermore, we draw upon Proposition~\ref{prop:upperlower} to derive numerical upper and lower bounds. The upper and lower bounds provide a maximum error that is made by using a finite number of iterations of the functional equation in Theorem~\ref{prop:N}. 

\vb 

Proposition~\ref{prop:upperlower} provides upper and lower bounds on the CDF, while we are actually interested in the probability mass functions. Clearly, upper and lower bounds on the CDF can be used to derive upper and lower bounds on the PMF. For example, for $k\geqslant1$,
\[
\left.
\begin{array}{l}
a_0\leqslant\Pb(X\leqslant k)\leqslant b_1\\
a_1\leqslant\Pb(X\leqslant k+1)\leqslant b_2
\end{array}
\right\}\implies
a_1-b_1\leqslant\Pb(X=k+1)\leqslant b_2-a_0.
\]

\vb

As a third way to validate the results we perform simulations. The simulation algorithm is directly based on Definition~\ref{def:Hawkescluster}, in combination with the thinning procedure of \cite{sim} for inhomogeneous Poisson processes. For closely related simulation algorithms, cf.\ \cite{ogatasim}. After simulating the Hawkes arrival proces, we simply flip a coin with success probability $\Pb(J>t-t_i)$, for arrivals entering at time $t_i$, to determine if they are still in the system at time $t$.

\begin{example}
In the fast Fourier inversion algorithm, in this example, we used an accuracy of $\gamma=4$, with $\gamma$ defined in \cite{ztrans}. Suppose that we have an exponential excitation function, with rate $r$, and exponential job sizes with mean $1/\mu$. Then we are in the Markovian setting and we can use the PDE method from Section \ref{sec:markov}, as well as the cluster approach from Section \ref{sec:nonmarkov}. In this example, we take the shot sizes $B$ to be deterministic. We choose the parameters as given in Table \ref{tab:parameters}. The parameters are arbitrarily chosen, however, one should realize that increasing $t$ leads to a higher computation time for both methods. Increasing $B$ or decreasing $r$ leads to a higher computation time only for the method of Section \ref{sec:nonmarkov}, given a fixed accuracy. The results are given in Table \ref{tab:results}. The accuracy of the results is determined by the discretization step in solving the ODE, which is set at $10^{-4}$, and by the number of iterations in the functional approach, which is taken to be 10, and the discretization size of the numerical integrals, which we took to be $2^{-12}$. Those parameters are chosen in such a way that the computation time is reasonable (in the order of a couple of minutes), and such that finer discretization does not improve the results substantially. 

We performed 100 batches of 100\,000 simulations (totalling 10 million runs), and calculated the standard deviations over those 100 batches (runtime approximately 1 hour). Note that the estimated values of all methods agree quite well for small values of $N(10)$ in Table \ref{tab:results}. For larger values of $N(10)$, the simulation estimates become more uncertain, as well as the spread between the upper and lower bound of the cluster method. Higher accuracy in the tail can be achieved by performing more than the current number of iterations.

{
\begin{table}[ht!]
\footnotesize
\captionsetup{font=small}
  \centering
  \begin{tabular}{ccccc}
    $t$ & $\l_{\infty} $ & $r$ & $\mu$ & $B$\\
    \hline
    10&1.45&2.15&1.25&0.98\\
  \end{tabular}
    \caption{\small \textit{Parameter choice, where $B$ is deterministic.}}
    \label{tab:parameters}
\end{table}

\begin{table}[ht!]
\footnotesize
\captionsetup{width=0.8775\textwidth}
\captionsetup{font=small}
  \centering
  \begin{tabular}{c|ccccccccc}  
	$\Pb(N(10) = \cdots)$&0&1&2&3&4&5&6\\
	\hline
Cluster upper&1.83e-1&2.54e-1&2.19e-1&1.51e-1&
9.22e-2&5.24e-2&2.87e-2\\
Cluster lower&1.83e-1&2.53e-1&2.17e-1&1.48e-1&
8.91e-2&4.89e-2&2.49e-2\\
Cluster &1.83e-1&2.54e-1&2.18e-1&1.50e-1&
9.08e-2&5.07e-2&2.68e-2\\
Diff.\ Eqn.&1.83e-1&2.54e-1&2.18e-1&1.50e-1
&9.09e-2&5.09e-2&2.70e-2\\
Simulations&1.83e-1&   2.54e-1&   2.18e-1&   1.50e-1&
   9.10e-2&   5.09e-2&   2.70e-2\\  
Sim st.\ dev.&0.01e-1&   0.01e-1&   0.01e-1&   0.01e-1&   0.09e-2&   0.07e-2&   0.05e-2\\ 
\hline
\multicolumn{8}{}{}\\
\hline
$\Pb(N(10) = \cdots)$&7&8&9&10&11&12&13\\
\hline
Cluster upper&1.56e-2&8.69e-3&5.23e-3&3.53e-3&
2.73e-3&2.34e-3&2.16e-3\\
Cluster lower&1.17e-2&4.74e-3&1.23e-3&0&0&
0&0\\
Cluster &1.36e-2&6.73e-3&3.24e-3&1.53e-3&7.12e-4&
3.27e-4&1.48e-4\\
Diff.\ Eqn.&1.38e-2&6.81e-3&{3.29e-3}&{1.56e-3}&{7.20e-4}
&{3.34e-4}&{1.53e-4}\\
Simulations&   1.37e-2& 6.8e-3&   3.3e-3&   1.6e-3&   7.4e-4&
   3.4e-4 &1.6e-4\\
Sim st.\ dev.&0.04e-2&  0.3e-3&   0.2e-3&   0.1e-3&   0.8e-4&   0.6e-4&   0.4e-4\\
\hline
\end{tabular}
    \caption{\textit{This table lists the probabilities that there are $0,1,\ldots, 13$ customers in the system. The standard deviation of the simulations is listed in the row `Sim st.\ dev'. The rows cluster upper, lower, respectively refer to the upper and lower bound obtained by using Proposition~\ref{prop:upperlower}. The row `Cluster' uses the initial approximation $f_0\equiv 1$. The row `Diff.\ Eqn.' is the solution that relies on Theorem~\ref{thm:zeta}.}} 
    \label{tab:results}
\end{table}}
\end{example}

\section{Asymptotic results}
\label{sec:as}
In this section we use the findings from the previous section to derive asymptotic results.
In Subsection~\ref{sec5.1} we assume that $B$ is regularly varying of index $-\alpha$.
We show that $N(t)$ is then also regularly varying, of the same index.
In Subsection~\ref{sec5.2} we study the heavy-traffic behavior of $N(\infty)$.
In both cases we strongly rely on the representation (\ref{eq:cluster}).

\subsection{Heavy-tailed asymptotics}
\label{sec5.1}
In this subsection we consider the case that the random variables $B_i$ are heavy-tailed. We use the following definition of a regularly varying random variable.
\begin{definition}
	A random variable $X$ on $[0, \infty)$ is called regularly varying of index $-\alpha$, denoted by ${\mathscr R}(-\alpha$), with $\alpha >0$, if
	\begin{equation}\label{RVDef}	
	\mathbb{P}(X > x) = \ell(x) x^{-\alpha},~ {x \geqslant 0},
	\end{equation}
	with $\ell(x)$ a slowly varying function at infinity, i.e., ${\ell(\gamma x)}/{\ell(x)} \to 1$ as $x\to\infty$ for all $\gamma > 1.$	
%We also say that the random variable $B$ is regularly varying of index $-\alpha$.
\end{definition}

We now prove that, if the $B_i$ are ${\mathscr R}(-\alpha)$, with $1< \alpha < 2$, then so is the number of customers
$N(t)$ at time $t$ in the
$\textrm{Hawkes}/G/\infty$ queue.
Notice that this contrasts with the fact that the number of customers in the $M/G/\infty$ queue (starting empty at time 0) is Poisson distributed at any time $t$.

{
In the sequel $\star$ denotes convolution, e.g.:
\[(f\star g)(u) := \int_0^u f(s) g(u-s) {\rm d}s,\]
and $n$-fold convolutions are iteratively defined by
\[
f^{n\star}(u) = (f\star f^{(n-1)\star})(u), \quad \text{for } n\geq 2,\quad\text{with }f^{1\star}\equiv f.
\]
In addition we define}, with $1 < \alpha < 2$,
\begin{align}
	R_1(u) &:= \sum_{n = 0}^{\infty} (b_1)^n ~(h^{n\star}\star{\mathscr J})(u), ~~ u > 0,
\label{nr25-1}
\\
	R_{\alpha}(u) &:= \Gamma(1-\alpha) ~ \ell(\infty)  ~  \sum_{n = 0}^{\infty}(b_1)^n \,  \left(h^{n\star}\star  ( h\star R_1)^\alpha\right)(u) , ~~ u > 0.
\label{nr25-2}
	\end{align}
	We throughout impose the stability condition, which now reads (cf.\ Remark~\ref{rem:stationary}): $\rho:=b_1 \int_0^\infty h(s){\rm d}s<1$.

\begin{theorem}\label{Theo:Heavy_tail} 
	If $B$ is ${\mathscr R}(-\alpha)$ with $\alpha \in (1, 2)$, then so is $N(t)$: as $z \uparrow 1$,
	\begin{equation}\label{Eq:Heavy_tail}
	\E z^{N(t)} - 1 + \l_{\infty}  (1 - z) \int_{0}^{t} R_1(u) {\rm d}u \sim  - \l_{\infty}  (1 - z)^{\alpha} \int_{0}^{t}  R_{\alpha}(u) {\rm d}u.
	\end{equation}
    \end{theorem}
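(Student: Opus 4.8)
The plan is to work from the cluster representation \eqref{eq:cluster} and the fixed-point equation \eqref{eq:fixedpoint}, substituting $z = 1 - w$ with $w \downarrow 0$, and to track the asymptotic expansion of $\eta(u,z)$ to the precision needed to see both the linear term (coefficient $R_1$) and the subsequent $w^\alpha$ correction term (coefficient $R_\alpha$). The key structural fact to exploit is that regular variation of $B$ of index $-\alpha$ with $\alpha \in (1,2)$ translates, via Tauberian-type reasoning, into an expansion of its Laplace--Stieltjes transform of the form $\beta(\theta) = 1 - b_1\theta + \Gamma(1-\alpha)\ell(\infty)\,\theta^\alpha + o(\theta^\alpha)$ as $\theta \downarrow 0$ (Bingham--Goldie--Teugels, the Abelian half being what I actually need here). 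This is the engine that produces the $(\cdot)^\alpha$ appearing inside $R_\alpha$.

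First I would set $g(u,w) := 1 - \eta(u,1-w)$, which is nonnegative and small uniformly in $u \in [0,t]$ as $w\downarrow 0$; rewriting \eqref{eq:fixedpoint} gives $1 - g(u,w) = (1 - w\,{\mathscr J}(u))\,\beta\!\left(\int_0^u h(s)\,g(u-s,w)\,{\rm d}s\right)$. Then I would argue the first-order behavior: $g(u,w) = w\,g_1(u) + o(w)$ where, plugging $\beta(\theta)\approx 1-b_1\theta$ and keeping linear terms, $g_1$ solves the renewal-type equation $g_1(u) = {\mathscr J}(u) + b_1\,(h\star g_1)(u)$, whose Neumann series is exactly $R_1(u)$ in \eqref{nr25-1}. (Convergence of that series is guaranteed by the stability condition $\rho<1$ together with $h\geq 0$ integrable; one needs a short argument that the series converges uniformly on $[0,t]$ — e.g. bounding $(h^{n\star}\star{\mathscr J})(u)\le H(t)^n/n!$ or similar on a finite interval.) Next, with the refined ansatz $g(u,w) = w\,R_1(u) + w^\alpha\,\psi(u) + o(w^\alpha)$, I would substitute into the fixed-point relation. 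The $-b_1\theta$ part of $\beta$ contributes $b_1(h\star g)$, feeding back both $w R_1$ and $w^\alpha\psi$; the $\Gamma(1-\alpha)\ell(\infty)\theta^\alpha$ part of $\beta$, evaluated at $\theta = \int_0^u h(s)g(u-s,w)\,{\rm d}s = w\,(h\star R_1)(u) + o(w)$, contributes $\Gamma(1-\alpha)\ell(\infty)\,w^\alpha\,(h\star R_1)(u)^\alpha + o(w^\alpha)$; and the cross term $-w\,{\mathscr J}(u)\cdot(-b_1\theta) = b_1 w\,{\mathscr J}(u)(h\star R_1)(u)w + \cdots$ is $O(w^2) = o(w^\alpha)$ since $\alpha<2$, so it does not interfere. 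Collecting the $w^\alpha$ terms yields $\psi(u) = b_1(h\star\psi)(u) + \Gamma(1-\alpha)\ell(\infty)\,\big(h\star(h\star R_1)^\alpha\big)(u)\cdot(\text{sign bookkeeping})$; inverting the renewal operator $\mathrm{Id} - b_1(h\star\,\cdot\,)$ via its Neumann series reproduces $R_\alpha(u)$ as in \eqref{nr25-2}. Finally I would feed $\eta(u,1-w) - 1 = -g(u,w) = -w R_1(u) - w^\alpha\psi(u) + o(w^\alpha)$ into \eqref{eq:cluster}: since $\E z^{N(t)} = \exp(\l_\infty\int_0^t(\eta(u,z)-1)\,{\rm d}u)$ and the exponent is $O(w)$, expanding $\exp(x) = 1 + x + O(x^2)$ and noting $O(w^2)=o(w^\alpha)$ gives $\E z^{N(t)} = 1 - \l_\infty w\int_0^t R_1(u)\,{\rm d}u - \l_\infty w^\alpha\int_0^t R_\alpha(u)\,{\rm d}u + o(w^\alpha)$, which after rearranging is precisely \eqref{Eq:Heavy_tail}.

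The main obstacle I anticipate is \emph{rigor in the two-term expansion of $g$}: I have been cavalier about the error terms being $o(w^\alpha)$ \emph{uniformly} in $u\in[0,t]$, and about the legitimacy of substituting the ansatz into the nonlinear fixed-point equation and equating coefficients. Making this clean requires (a) an a priori bound $0 \le g(u,w) \le w\,R_1(u)(1+o(1))$ uniform on $[0,t]$, obtainable by monotonicity/iteration of $\phi$ à la Proposition~\ref{prop:upperlower}; (b) controlling the remainder in $\beta(\theta) = 1 - b_1\theta + \Gamma(1-\alpha)\ell(\infty)\theta^\alpha + r(\theta)$ with $r(\theta) = o(\theta^\alpha)$, and checking that $\int_0^u h(s)\,r\!\big(\text{argument}\big)\,{\rm d}s$ stays $o(w^\alpha)$ after the renewal inversion — this is where slow variation needs care, since $\ell$ may be unbounded, though on the finite horizon $[0,t]$ the convolutions are tame; and (c) justifying that inverting $\mathrm{Id}-b_1(h\star\cdot)$ is a bounded operation on $C[0,t]$ or $L^\infty[0,t]$, again from $\rho<1$ and finiteness of the horizon. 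A secondary, more bookkeeping-level obstacle is getting the sign of $\Gamma(1-\alpha)$ right (it is negative for $\alpha\in(1,2)$) so that the $\sim$ in \eqref{Eq:Heavy_tail} comes out with the stated sign; this is purely a matter of careful substitution of the known LST expansion for regularly varying $B$.
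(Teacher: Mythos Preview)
Your approach is essentially the same as the paper's: both work from the fixed-point equation \eqref{eq:fixedpoint}, invoke the Bingham--Goldie--Teugels expansion $\beta(s)-1+b_1 s\sim -\Gamma(1-\alpha)s^\alpha\ell(1/s)$, derive Volterra equations of the second kind for the coefficients $R_1$ and $R_\alpha$ via a two-term ansatz for $1-\eta(u,z)$, solve those by Picard/Neumann iteration, and then push the expansion through the exponential in \eqref{eq:cluster}. The paper's argument is in fact at the same heuristic level you worry about (it treats $\ell(1/s)$ as $\ell(\infty)$ without further comment and justifies the ansatz by a short contradiction argument rather than uniform error control), so your obstacles (a)--(c) are well spotted but are not addressed in the paper either. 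One minor slip: the inhomogeneous term in your equation for $\psi$ should be $\Gamma(1-\alpha)\ell(\infty)\,(h\star R_1)^\alpha(u)$, not $h\star(h\star R_1)^\alpha(u)$; the extra $h\star$ only appears \emph{after} inverting $\mathrm{Id}-b_1(h\star\cdot)$ via the Neumann series, which then correctly gives \eqref{nr25-2}.
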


\begin{proof}We use Theorem \ref{prop:N} to characterize the $z$-transform   of the distribution of $N(t)$.   It follows from \cite[Thm.~8.1.6]{bingham1989regular}, which relates the behavior of a regularly varying function at infinity and the behavior of its {Laplace-Stieltjes transform (LST)} near $0$, that  $\beta(s) - 1 +s\, {b_1}  \sim - \Gamma(1-\alpha) ~ s^{\alpha} ~\ell ({s}^{-1}),$ as $s  \downarrow 0$, for $\alpha \in (1, 2)$. Thus, as $z \uparrow 1$,
	\begin{align}\label{HT1}
		&\beta\left( \int_{0}^{u} h(s)\left(1 - \eta(u - s, z)\right)  {\rm{d}} s\right) - 1 + \left( \int_{0}^{u} h(s)\left(1 - \eta(u - s, z)\right)  {\rm{d}} s\right)   {b_1} \nonumber \\  & \sim  -\Gamma(1-\alpha) ~ \left(\int_{0}^{u} h(s)\left(1 - \eta(u - s, z)\right)  {\rm{d}} s \right)^{\alpha} \ell\left(\f{1}{\int_{0}^{u} h(s)\left(1 - \eta(u - s, z)\right)  {\rm{d}} s }\right).	
  	\end{align}
  	Substituting this into Eqn.\ \eqref{eq:fixedpoint}, we obtain, as $z \uparrow 1$,
  	\begin{align}\label{HT1}
  	\lefteqn{1 - \eta(u, z)     \sim 1 - \left(\bar{\mathscr J}( u) + z {\mathscr J}( u)\right) \Bigg[ 1 - \left( \int_{0}^{u}  h(s)\left(1 - \eta(u - s, z)\right)  {\rm{d}} s \right){b_1}  \nonumber }\\		
  	&   -{\Gamma(1-\alpha)} ~ \left(\int_{0}^{u} h(s)\left(1 - \eta(u - s, z)\right)  {\rm{d}} s \right)^{\alpha} \ell\left(\f{1}{\int_{0}^{u} h(s)\left(1 - \eta(u - s, z)\right)  {\rm{d}} s }\right) \Bigg].	
  	\end{align}
	Rearranging the terms of (\ref{HT1}) and simplifying yields, as $z \uparrow 1$, up to $O((z - 1)^2)$ terms,
	\begin{align}\label{HT2}
	&1 - \eta(u, z) \nonumber \\  &=   1 - (1 - {\mathscr J}(u)   (1 - z)) \Bigg[ 1 -\left( \int_{0}^{u}  h(s)\left(1 - \eta(u - s, z)\right)  {\rm{d}} s \right) {b_1}  \nonumber \\		
	&  - {\Gamma(1-\alpha)} ~ \left(\int_{0}^{u} h(s)\left(1 - \eta(u - s, z)\right)  {\rm{d}} s \right)^{\alpha} \ell\left(\f{1}{\int_{0}^{u} h(s)\left(1 - \eta(u - s, z)\right)  {\rm{d}} s }\right)\Bigg]   \nonumber \\
	&= {\mathscr J}( u)\, (1 - z) + \left( \int_{0}^{u}  h(s)\left(1 - \eta(u - s, z)\right)  {\rm{d}} s \right){b_1}\nonumber \\		
	& + {\Gamma(1-\alpha)} ~ \left(\int_{0}^{u} h(s)\left(1 - \eta(u - s, z)\right)  {\rm{d}} s \right)^{\alpha} \ell\left(\f{1}{\int_{0}^{u} h(s)\left(1 - \eta(u - s, z)\right)  {\rm{d}} s }\right).
	\end{align}
{Observe that}
 $1 - \eta(u, 1) =  0$, and if $ \E S(u)$ is finite, then $1 - \eta(u, z) = 1 - \E z^{S(u)} =  \E S(u) (1-z) + o(1 - z)$. Now we claim  that the leading term in  $1 - \eta(u, z)$ in \eqref{HT2} should be $\E S(u) (1 - z)$ with $\E S(u)=R_1(u)$,
as given in (\ref{nr25-1}),
for $z \uparrow 1$. To this end, observe that it cannot be $R_1(u) (1 - z)^{1 + \epsilon}$ for some $\epsilon > 0$ because of the term ${\mathscr J} ( u) (1 - z) $ in the right hand side of Eqn.\ \eqref{HT2} --- realize that if that would be the case then dividing by $(1 - z)$ and letting $z \uparrow 1$ yields a contradiction. So the dominant term of $1 - \eta(u, z)$ for $z \uparrow 1$, is $R_1(u) (1 - z)$, where
	\begin{align}\label{HT3}
		R_1(u) &= {\mathscr J }( u) + b_1 \int_{0}^{u} h(s) R_1( u -  s)  {\rm{d}} s.	
	\end{align}	
	
	The above equation can be recognized as  a Volterra integral equation of the second kind  (see \cite[Ch. I]{tricomi1957integral}). By using Picard iteration, we obtain, for $u\geqslant 0$,
	\begin{align}\label{HT4}
	R_1(u) &= {\mathscr J} (u) + {b_1} \int_{0}^{u} h(s){\mathscr J} ( u - s)   {\rm{d}} s \nonumber \\
	&+ {b_1} \int_{0}^{u} h(s) \left[ {b_1} \int_{0}^{u - s} h(v) {\mathscr J}( u - s - v) {\rm{d}} v \right]   {\rm{d}} s + \cdots \nonumber \\
	&= {\mathscr J}( u) + {b_1} \, (h\star{\mathscr J})( u)  + ({b_1})^2 \, (h^{2\star}\star{\mathscr J})(u)   + \cdots \nonumber \\
	& = \sum_{n = 0}^{\infty} ({b_1})^n ~(h^{n\star}\star {\mathscr J})(u), ~~ u > 0.
	\end{align}
	Now we claim when $z \uparrow 1$  (using the same reasoning as for the first term) that the next term of $1 - \eta(u, z)$ is $R_{\alpha}(u) (1 - z)^{\alpha}$, with $R_{\alpha}(u)$ given in (\ref{nr25-2}).  Substituting this in \eqref{HT2} and calculating the coefficient of $(1 - z)^{\alpha}$, we find	in the limit $z \uparrow 1$:
	\begin{align}\label{HT5}
	R_{\alpha}(u) &=   {b_1} \int_{0}^{u} h(s) R_{\alpha}(u - s) {\rm{d}} s + {\Gamma(1-\alpha)} ~ \ell(\infty) ~  \left(\int_{0}^{u} h(s) R_1(u - s) {\rm{d}} s\right)^{\alpha}
.
%l\left(\f{1}{\int_{0}^{u} h(s) R_1(u - s) (1 - z)  {\rm{d}} s}\right).
	\end{align}
	This equation is also a Volterra integral equation of the second kind (see \cite[Ch. I]{tricomi1957integral}) and using   Picard iteration, we get 
	\begin{align}
	R_{\alpha}(u) \nonumber &= {\Gamma(1-\alpha)} ~ \ell(\infty) ~  \sum_{n = 0}^{\infty}  ({b_1})^n ~  \left(h^{n\star}\star  ( h\star R_1)^\alpha\right)(u), ~~ u > 0.
%l\left(\f{1}{\int_{0}^{u} h(s) R_1(u - s) (1 - z)  {\rm{d}} s}\right)\right).
	\end{align}	
	
	Now we use Theorem~\ref{prop:N}, in combination with the asymptotics that we just established for the first and second term. As $z \uparrow 1$,
	\begin{align}\label{HT_final}
	\E z^{N(t)}
%\exp\left(\l_{\infty}  \int_{0}^{t} (\eta(u, z) - 1) {\rm d}u\right)
 %\nonumber  \\
	&\sim \exp\left(-\l_{\infty}  \int_{0}^{t} ( R_1(u) (1 - z)  + R_{\alpha}(u) (1 - z)^\alpha) {\rm d}u\right) \nonumber  \\	
	&= \exp\left(-\l_{\infty}  (1 - z) \int_{0}^{t} R_1(u) {\rm d}u\right) \exp\left(-\l_{\infty}  (1 - z)^\alpha \int_{0}^{t} R_{\alpha}(u) {\rm d}u\right).
	\end{align}
	Using the exponential function expansion in the above equation, we get
	\begin{align}\nonumber
	&\E z^{N(t)} \nonumber \\ &\sim  \left(1 -\l_{\infty}  (1 - z) \int_{0}^{t}  R_1(u) {\rm d}u + O((1 - z)^2)\right)\\
	&\nonumber\hspace{3cm}\cdot
	\left(1 - \l_{\infty}  (1 - z)^\alpha \int_{0}^{t} R_{\alpha}(u) {\rm d}u + O((1 - z)^{2 \alpha})\right) \nonumber \\
	&=  1 - \l_{\infty}  (1 - z) \int_{0}^{t} \left(R_1(u) + (1 - z)^{\alpha - 1}  R_{\alpha}(u) \right) {\rm d}u + O((1 - z)^{2}). \nonumber
	\end{align}
	
Using \cite[Thm.~8.1.6]{bingham1989regular} in the reverse way, we conclude that, indeed, $N(t) \in {\mathscr R}(-\alpha)$, with $\alpha \in (1,2)$.
\end{proof}

\begin{remark}
For special choices of $h(\cdot)$ and $\mathscr J (u)$, one can obtain explicit expressions for $\int_0^t R_1(u) {\rm d}u$ and
$\int_0^t R_{\alpha}(u) {\rm d}u$. 
Below we consider the case $h(u) = {\rm e}^{-ru}$ and $\mathscr J(u)  = {\rm e}^{-\mu u}$.
Substituting these in Eqn. \eqref{HT3}, we get
	\begin{equation}\label{GT_Htaffic20}
	R_1(u) = e^{-\mu u} + b_1 \int_{0}^{u} e^{-r (u - s)} R_1(s)  {\rm{d}} s.
	\end{equation}
	The above equation is a Volterra integral equation of the second kind with kernel $k(u, s) = e^{-r(u - s)}$. Solving it using the resolvent kernel method given in \cite[Chapter I]{tricomi1957integral}, we get
	\[
	R_1(u) = \f{1}{r_0 - \m} \left[(r - \m)e^{- \m u} - b_1 \ e^{-r_0 u}\right].
	\]
From this, or directly from (\ref{GT_Htaffic20}) after integrating both sides from $0$ to $\infty$, we obtain
\[
\int_0^{\infty} R_1(u) {\rm d}u = \frac{1/\mu}{1 - \rho} .
\]
	We next turn to $R_{\alpha}(u)$. We substitute $h(\cdot)$ and $R_1(\cdot)$ in Eqn. \eqref{HT5}, which leads to a Volterra integral equation of the second kind, i.e.,	
	\[
	R_{\alpha}(u) = b_1 \int_{0}^{u} e^{-r(u - s)} R_{\alpha}(s)  {\rm{d}} s + {\Gamma(1-\alpha)} ~ \ \ell(\infty)  \left(\f{e^{-\m u} - e^{-r_0u}}{r_0 - \m}\right)^\alpha.
	\]
Substituting $r_0 = r (1 - \rho)$ and then integrating both sides from $0$ to $\infty$ yields
\[
\int_0^{\infty} R_{\alpha}(u) {\rm d}u = \frac{b_1}{r} \int_0^{\infty} R_{\alpha}(s) {\rm d}s
	+ {\Gamma(1-\alpha)} ~ \ \ell(\infty) \ \int_{0}^{\infty} \left(\f{e^{- r (1 - \rho)s} - e^{-\m s} }{\m - r (1 - \rho)}\right)^\alpha {\rm{d}} u ,
\]
and hence
	\[
	\int_{0}^{\infty} R_{\alpha}(u)  {\rm{d}} u =
  %{\Gamma(1-\alpha)} ~\ \ell(\infty) \E B \int_{0}^{\infty} \left(\f{e^{- r (1 - \rho)s} - e^{-\m s} }{\m - r (1 - \rho)}\right)^\alpha {\rm{d}} s  \int^{\infty}_{u = s} e^{-r (1 - \rho)(u - s)}    {\rm{d}} u\nonumber \\
	%&+ {\Gamma(1-\alpha)} ~ \ \ell(\infty) \ \int_{0}^{\infty} \left(\f{e^{- r (1 - \rho)s} - e^{-\m s} }{\m - r (1 - \rho)}\right)^\alpha {\rm{d}} u \nonumber \\
	 \f{\ {\Gamma(1-\alpha)} ~\ \ell(\infty)}{1 - \rho} \int_{0}^{\infty} e^{- \alpha r (1 - \rho)s} \left(\f{1 - e^{r(1 - \rho)s - \m s} }{\m - r (1 - \rho)}\right)^\alpha {\rm{d}} s.
	\]
	Substituting $e^{r(1 - \rho)s - \m s} := v$ in the above equation, yields
	\begin{align*}\label{GT_HtafficH8}
	\int_{0}^{\infty} R_{\alpha}(u)  {\rm{d}} u &=   \f{\ {\Gamma(1-\alpha)} ~\ \ell(\infty)}{(1 - \rho)(\m - r (1 - \rho))^{\alpha + 1}} \int_{0}^{1} v^{\f{\alpha r (1 - \rho)}{\m - r (1 - \rho)} - 1} \left( 1 - v\right)^\alpha {\rm{d}} v \nonumber \\
	&= \f{\ {\Gamma(1-\alpha)} ~\ \ell(\infty)}{(1 - \rho)(\m - r (1 - \rho))^{\alpha + 1}}~ \textbf{B}\left(\f{\alpha r (1 - \rho)}{\m - r (1 - \rho)}, ~ \alpha + 1\right),
	\end{align*}
	where $\textbf{B}(\cdot, \cdot)$ is the Beta function. Using a well-known property of the Beta function, viz., $p\,\textbf{B}(p, q) = (p+q) \textbf{B}(p + 1, q)$, we obtain
	\[
	\int_{0}^{\infty} R_{\alpha}(u)  {\rm{d}} u =  \f{\ {\Gamma(1-\alpha)} ~\ \ell(\infty)}{(1 - \rho)(\m - r (1 - \rho))^{\alpha + 1}}~ \f{\m (\alpha + 1) - r (1 - \rho)}{\alpha r (1 - \rho)}~\textbf{B}\left(\f{\alpha r (1 - \rho)}{\m - r (1 - \rho)} + 1, ~ \alpha + 1\right).
	\]
\end{remark}

\subsection{Heavy-traffic asymptotics}
%%%%%%%%%%%%%%%%%%%%%%%%%%%%% Heavy  traffic    %%%%%%%%%%%%%%%%%%%%%%%%%%%%%%%%%%%%%%
\label{sec5.2}

In this subsection we discuss the heavy-traffic behavior of $\Lambda \equiv \Lambda(\infty)$ and $N \equiv N(\infty)$,
i.e., we consider the stationary Hawkes intensity process and the number of customers, in the regime where we let the load generated by the Hawkes process approach its instability boundary. In other words, we study the system's behavior when  $\rho= {b_1} \int_0^{\infty} h(u) {\rm d}u \uparrow 1$.  Let ${\Gamma}(\a_1,\a_2)$ denotes a Gamma distribution with shape parameter $\a_1$ and rate parameter $\a_2$. We assume in this subsection that $h(u)=e^{-ru}$, so that $\rho = {b_1}/r$.

\begin{theorem}\label{Theo:HT_LT}
	Consider a Hawkes process for which the first two moments of $B$ are finite,
and where $h(u) = {\rm e}^{-ru}$, $u \geqslant 0$.
As ${\rho \uparrow 1}$,
	\begin{equation}
	(1 - \rho) \Lambda \stackrel{d}{\to} {\Gamma}\left(\f{2 r \l_{\infty} }{b_2}, \f{2r}{b_2}\right).
	\end{equation}				
\end{theorem}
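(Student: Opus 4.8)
### Proof proposal

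\textbf{Overall approach.} The plan is to compute the Laplace--Stieltjes transform $\beta_\Lambda(s) := \E e^{-s\Lambda}$ of the stationary Hawkes intensity in closed form (or implicitly), rescale $s \mapsto s(1-\rho)$, and show that the limiting transform as $\rho\uparrow 1$ coincides with that of a $\Gamma\!\left(\tfrac{2r\lambda_\infty}{b_2}, \tfrac{2r}{b_2}\right)$ distribution, namely $\bigl(1 + \tfrac{b_2}{2r}s\bigr)^{-2r\lambda_\infty/b_2}$. Since convergence of Laplace transforms on $[0,\infty)$ to the transform of a (proper) distribution implies convergence in distribution, this suffices. The natural starting point is the stationary version of the ODE/PDE machinery from Section~\ref{sec:markov}: setting $\mu=0$ (infinitely long service, so that $N$ plays no role) and $z=1$ in the characteristic ODE \eqref{eq:ODE2}, or equivalently taking the stationary limit in \eqref{Eq: Transiernt11}, gives a relation characterizing the stationary intensity transform. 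With $h(u)=e^{-ru}$ the Hawkes intensity is Markovian, and one obtains that $\psi(s) := \beta_\Lambda(s)$ satisfies a first-order ODE of the form $rs\,\psi'(s) = \lambda_\infty(1-\beta(s))\psi(s)$, which integrates to $\psi(s) = \exp\!\bigl(-\tfrac{\lambda_\infty}{r}\int_0^s \tfrac{1-\beta(v)}{v}\,\dif v\bigr)$.

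\textbf{Key steps.} First I would record the stationary-intensity transform identity, being careful to note that $\beta$ here is the LST of $B$ and that $\rho = b_1/r$ with $b_1 = \E B$. Second, I would introduce the rescaling: define $\psi_\rho(s) := \E e^{-s(1-\rho)\Lambda}$ and express it via the formula above evaluated at $s(1-\rho)$, i.e.\ $\psi_\rho(s) = \exp\!\bigl(-\tfrac{\lambda_\infty}{r}\int_0^{s(1-\rho)} \tfrac{1-\beta(v)}{v}\,\dif v\bigr)$. Third, I would analyze the exponent as $\rho\uparrow 1$. The point is that $\int_0^{s(1-\rho)}\tfrac{1-\beta(v)}{v}\dif v$ is driven by the behavior of $1-\beta(v)$ near $0$: with two finite moments, $1-\beta(v) = b_1 v - \tfrac{b_2}{2}v^2 + o(v^2)$, so $\tfrac{1-\beta(v)}{v} = b_1 - \tfrac{b_2}{2}v + o(v)$. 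Fourth, I would substitute $\rho = b_1/r$ so that the leading $b_1$-term in the exponent becomes $-\tfrac{\lambda_\infty b_1}{r}\cdot s(1-\rho)$, which vanishes in the limit, and the next-order term, $+\tfrac{\lambda_\infty}{r}\cdot\tfrac{b_2}{4}\cdot s^2(1-\rho)^2$, also vanishes — so a naive expansion gives a degenerate limit, signalling that the correct statement must involve cancellation against the \emph{normalization} implicit in $\psi_\rho(0)=1$ and a more careful bookkeeping. The cleaner route is to differentiate: $\tfrac{\dif}{\dif s}\log\psi_\rho(s) = -\tfrac{\lambda_\infty(1-\rho)}{r}\cdot\tfrac{1-\beta(s(1-\rho))}{s(1-\rho)}$, substitute the expansion, and note that as $\rho\uparrow 1$ one does \emph{not} send $1-\rho\to 0$ blindly but rather uses $b_1 = r\rho$, giving $-\tfrac{\lambda_\infty(1-\rho)}{r}\bigl(r\rho - \tfrac{b_2}{2}s(1-\rho) + o(1-\rho)\bigr) = -\lambda_\infty(1-\rho)\rho + \tfrac{\lambda_\infty b_2}{2r}s(1-\rho)^2 + \cdots$. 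This still degenerates, which tells me the exponent must be re-centered: write $\psi_\rho(s) = e^{-c_\rho(s)}$ and track that the relevant non-vanishing contribution comes from integrating $\tfrac{1-\beta(v)}{v} - b_1$ over $[0, s(1-\rho)]$ after the $b_1$-part is absorbed by a shift — i.e.\ one should really verify that $-\log\psi_\rho(s) + (\text{linear term}) \to \tfrac{2r\lambda_\infty}{b_2}\log\bigl(1+\tfrac{b_2}{2r}s\bigr)$, and that the linear term is itself $o(1)$ or is exactly the piece that a rescaling by $(1-\rho)$ of a mean-$O((1-\rho)^{-1})$ quantity produces.

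\textbf{Main obstacle.} The genuine difficulty is the bookkeeping in the limit: the stationary mean $\E\Lambda = \lambda_\infty r/r_0 = \lambda_\infty/(1-\rho)$ blows up like $(1-\rho)^{-1}$, so $(1-\rho)\Lambda$ has mean $\to\lambda_\infty$ — consistent with a $\Gamma(2r\lambda_\infty/b_2, 2r/b_2)$ limit, whose mean is $\lambda_\infty$. So the limit is \emph{not} degenerate, and the correct argument must extract the $\Gamma$-transform $(1+\tfrac{b_2}{2r}s)^{-2r\lambda_\infty/b_2}$ rather than an exponential; this means the dominant balance in $\int_0^{s(1-\rho)}\tfrac{1-\beta(v)}{v}\dif v$ is \emph{not} captured by a finite Taylor polynomial uniformly, and one must instead work with the ODE $\tfrac{\dif}{\dif s}\log\psi_\rho(s) = -\tfrac{\lambda_\infty(1-\rho)}{r}\cdot\tfrac{1-\beta(s(1-\rho))}{s(1-\rho)}$ and pass to the limit \emph{inside} an integral from $0$ to $s$, using $\tfrac{1-\beta(s(1-\rho))}{s(1-\rho)} \to b_1 = r\rho \to r$ together with the \emph{correction} $\tfrac{1-\beta(w)}{w} = b_1 - \tfrac{b_2}{2}w + o(w)$ evaluated at $w = s(1-\rho)$, so that $-\tfrac{\lambda_\infty(1-\rho)}{r}\cdot\tfrac{1-\beta(s(1-\rho))}{s(1-\rho)}$ must be combined with the subtraction of $\E[s(1-\rho)\Lambda]$-type centering. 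Concretely I expect the clean computation to be: $\log\psi_\rho(s) = -\lambda_\infty(1-\rho)\cdot\tfrac{1}{r}\int_0^s \tfrac{1-\beta(\sigma(1-\rho))}{\sigma(1-\rho)}\,\dif\sigma$ and the limit of the integrand, after the mean-term is handled, is governed by $\tfrac{1-\beta(\sigma(1-\rho))}{\sigma(1-\rho)}\approx \tfrac{b_1}{1}$ near the instability — the honest limit requires expanding $\beta$ to second order and recognizing $\int_0^s \tfrac{\dif\sigma}{1 + (b_2/(2r))\sigma}$ after rescaling, which produces the logarithm and hence the Gamma transform. I would present this by first stating the exact transform formula (citing the $\mu=0$, $z=1$ specialization of Theorem~\ref{thm:zeta} / Eqn.~\eqref{Eq: Transiernt11}), then performing the rescaled limit with the two-term expansion of $\beta$, and finally invoking Lévy continuity to conclude convergence in distribution.
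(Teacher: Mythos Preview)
Your overall strategy --- compute the stationary LST of $\Lambda$, rescale by $(1-\rho)$, pass to the limit, invoke L\'evy continuity --- is exactly the paper's. The gap is at the very first step: the ODE you write down for $\psi(s)=\E e^{-s\Lambda}$ is wrong, and with it the integral formula $\psi(s)=\exp\bigl(-\tfrac{\lambda_\infty}{r}\int_0^s \tfrac{1-\beta(v)}{v}\,\dif v\bigr)$. That expression is the stationary LST of a \emph{shot-noise} intensity (Poisson-driven, non-self-exciting), not of a Hawkes intensity. Taking the steady-state PDE \eqref{eq:stronghawkesPDE} with $z=1$ actually gives
\[
(rs+\beta(s)-1)\,\psi'(s)\;=\;-\,r s\,\lambda_\infty\,\psi(s),
\qquad
\psi(s)\;=\;\exp\!\left[-\lambda_\infty r\int_0^s \frac{u}{ru+\beta(u)-1}\,\dif u\right].
\]
The self-exciting feedback puts $\beta(s)-1$ in the \emph{denominator} alongside $rs$, not in the numerator.

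This structural difference is precisely what makes the limit non-degenerate. Since $\beta(u)-1=-b_1 u+\tfrac{b_2}{2}u^2+o(u^2)$ and $b_1=r\rho$, the denominator behaves like $r(1-\rho)u+\tfrac{b_2}{2}u^2$ near $u=0$: it has a near-singularity that scales with $(1-\rho)$. Substituting $u=v(1-\rho)$ in the exponent of $\psi\bigl(s(1-\rho)\bigr)$ yields
\[
-\lambda_\infty r\int_0^s \frac{v(1-\rho)^2}{r(1-\rho)^2 v+\tfrac{b_2}{2}(1-\rho)^2 v^2+o((1-\rho)^2)}\,\dif v
\;\xrightarrow[\rho\uparrow1]{}\;
-\lambda_\infty\int_0^s \frac{\dif v}{1+\tfrac{b_2}{2r}v},
\]
which is $-\tfrac{2r\lambda_\infty}{b_2}\log\bigl(1+\tfrac{b_2}{2r}s\bigr)$, i.e.\ the $\Gamma$ transform. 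With your shot-noise formula the integrand has no such near-singularity, the exponent is $O(1-\rho)$, and you correctly observed that every attempt collapses to a degenerate limit --- that diagnostic was telling you the input formula was off, not that more careful bookkeeping was needed.
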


\begin{proof}
	Start with the steady-state version of \eqref{Eq: Transiernt1}. Take $z=$1, so that we focus on $\Lambda$, as  $\E e^{-s \Lambda}$  is the LST of $\Lambda$. 	Then one gets the ODE:
	
	$$ \f{{\rm d}}{{\rm d} s} \E e^{-s \Lambda} = - \f{r s \lambda_{\infty}}{rs + \beta(s) -1} \E e^{-s  \Lambda}.$$
	
	Its solution is	
\begin{equation}\label{LST_Lambda-infty}
	\E e^{-s \Lambda} = \exp\left[- \lambda_{\infty} r \int_0^s \f{u}{ru + \beta(u) -1} {\rm d}u \right].
\end{equation}	
	
	Since we assume that the first two moments of $B$ are finite, we can write $\beta(s) = 1 - s \ {b_1}  +  \f{s^2}{2} b_2  + o(s^2),$ as $s  \downarrow 0$. Now consider $\E e^{-s (1 - \rho) \Lambda}$ with $\rho = b_1 /r$. Substituting  $u = v (1-\rho)$ and the above $\beta(s)$ expansion in \eqref{LST_Lambda-infty}, we get for $\rho \uparrow 1$:
	
%	\begin{align} 
%	\E e^{-s (1 - \rho) \Lambda} &= \exp\left[- \lambda_{\infty} \int_0^s \f{1}{1 + vb_2/(2r) - b_3/(6r) v^2 (1-\rho) + O(1 - \rho)} {\rm d} v \right].
%	\end{align}
\begin{align} 
\E e^{-s (1 - \rho) \Lambda} &= \exp\left[- \lambda_{\infty} \int_0^s \f{1}{1 + vb_2/(2r) + o(1 - \rho)} {\rm d} v \right].
\end{align}
	For $\rho \uparrow 1$, one gets from the above equation that, with $\kappa= 2r/b_2$,
	\begin{align} 
	\lim_{\rho \uparrow 1} \E e^{-s (1 - \rho) \Lambda} = \left(\f{\kappa}{\displaystyle \kappa +  s}\right)^{\kappa \lambda_{\infty}}.
	\end{align} 
		By virtue of L\'evy's convergence theorem \cite[Ch. XVIII]{WILL}, the result follows.		
\end{proof}
We now prove that a very similar result holds for the steady-state number of customers $N$ in the $\textrm{Hawkes}/M/\infty$ queue.
We accomplish this by first observing that the first two moments of $(1-\rho) \Lambda$ and $(1-\rho) \mu N$ have the same limit for $\rho \uparrow 1$
(in fact, this holds for the first $g$ moments, if $\E B^g < \infty$, as can easily be verified from the steady-state version of
(\ref{Eq: Transiernt2})).
We subsequently apply the following lemma.
Denote $\sigma^2_{X_n} := \mathbb{V}{\rm ar}\, X_n$, $\sigma^2_{Y_n} := \mathbb{V}{\rm ar}\, Y_n$, and $C_n := \mathbb{C}{\rm ov}(X_n, Y_n)/\sqrt{\mathbb{V}{\rm ar}\, X_n\cdot \mathbb{V}{\rm ar}\, Y_n}$. 
	\begin{lemma}
		Suppose that the following conditions hold:
		(i) $ \E Y_n/\E X_n\to \b$ and $\sigma_{Y_n}/\sigma_{X_n} \to \b$ as $n\to\infty$, (ii)
			$C_n \to 1$ as $n \to \infty$, (iii)
			$X_n \stackrel{d}{\to} X$, and (iv) there is a finite $M$ such that, for all $n$, $0\leqslant {\mathbb E}\,Y_n<M$ and 
			$0\leqslant \sigma^2_{Y_n} < M$. 
				Then $Y_n \stackrel{d}{\to} \b X$.
	\end{lemma}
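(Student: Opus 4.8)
The plan is to show that $Y_n$ converges in distribution to $\beta X$ by proving that $Y_n - \beta X_n \stackrel{d}{\to} 0$ and then invoking Slutsky's theorem together with condition (iii). The natural tool for the first step is a second-moment (variance) computation: I will show that $\E(Y_n - \beta X_n) \to 0$ and $\mathbb{V}{\rm ar}(Y_n - \beta X_n) \to 0$, which together give $L^2$-convergence of $Y_n - \beta X_n$ to $0$, hence convergence in probability, hence convergence in distribution to the constant $0$.

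First I would address the mean. From condition (i), $\E Y_n/\E X_n \to \beta$. One has to be a little careful here: if $\E X_n \to \infty$ (which is exactly the heavy-traffic situation, since $(1-\rho)\Lambda$ has a nondegenerate limit only after the scaling, so the unscaled means blow up — but in the lemma as stated $X_n$ is already the scaled quantity), then $\E X_n$ is bounded and $\E(Y_n - \beta X_n) = \E X_n(\E Y_n/\E X_n - \beta) \to 0$ provided $\E X_n$ stays bounded, which follows from (iii) (convergence in distribution plus, if needed, uniform integrability) or more directly from (iv)-type bounds applied symmetrically; in the intended application both $\E X_n$ and $\E Y_n$ converge to the finite mean of the limiting Gamma law, so this is immediate. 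Next, the variance: expand
\[
\mathbb{V}{\rm ar}(Y_n - \beta X_n) = \sigma_{Y_n}^2 - 2\beta\, C_n\, \sigma_{X_n}\sigma_{Y_n} + \beta^2 \sigma_{X_n}^2 = \sigma_{X_n}^2\left(\frac{\sigma_{Y_n}^2}{\sigma_{X_n}^2} - 2\beta C_n \frac{\sigma_{Y_n}}{\sigma_{X_n}} + \beta^2\right).
\]
By condition (i), $\sigma_{Y_n}/\sigma_{X_n} \to \beta$, and by condition (ii), $C_n \to 1$, so the bracketed factor tends to $\beta^2 - 2\beta^2 + \beta^2 = 0$. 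If $\sigma_{X_n}^2$ were bounded, we would be done; in the application $\sigma_{X_n}^2$ also converges (to the variance of the limiting Gamma), and more generally condition (iv) bounds $\sigma_{Y_n}^2$ uniformly, which combined with the Cauchy–Schwarz bound $C_n^2 \le 1$ and the ratio convergence also pins down $\sigma_{X_n}^2$ as bounded. Either way, $\mathbb{V}{\rm ar}(Y_n - \beta X_n) \to 0$.

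Having established $\E(Y_n - \beta X_n) \to 0$ and $\mathbb{V}{\rm ar}(Y_n - \beta X_n) \to 0$, it follows that $Y_n - \beta X_n \to 0$ in $L^2$ and hence in probability. Since $X_n \stackrel{d}{\to} X$ implies $\beta X_n \stackrel{d}{\to} \beta X$, Slutsky's theorem gives $Y_n = \beta X_n + (Y_n - \beta X_n) \stackrel{d}{\to} \beta X$, which is the claim. I expect the main (minor) obstacle to be the bookkeeping around whether the raw variances $\sigma_{X_n}^2$ are bounded: the cleanest route is to observe that $\mathbb{V}{\rm ar}(Y_n - \beta X_n) \ge 0$ forces, via the displayed identity, $\sigma_{X_n}^2$ and $\sigma_{Y_n}^2$ to be comparable up to the factor $\beta^2$, so the uniform bound on $\sigma_{Y_n}^2$ from condition (iv) transfers to $\sigma_{X_n}^2$; alternatively one simply notes that in the intended application all these quantities converge to the finite moments of the Gamma limit, so boundedness is automatic. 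A remark that condition (iv) is what rules out mass escaping to infinity (and hence guarantees the $L^2$ argument, rather than merely a weaker tightness argument, applies) would round out the proof.
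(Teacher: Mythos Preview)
Your approach is essentially the paper's: show the difference goes to zero in $L^2$, upgrade to convergence in probability, then apply Slutsky. The one execution difference is that the paper works with $\alpha_n X_n - Y_n$ where $\alpha_n := C_n\,\sigma_{Y_n}/\sigma_{X_n}$ (a regression-type coefficient) rather than your $\beta X_n - Y_n$. With that choice the variance collapses algebraically to
\[
\mathbb{V}{\rm ar}(\alpha_n X_n - Y_n) = (1 - C_n^2)\,\sigma_{Y_n}^2,
\]
which tends to $0$ directly from (ii) and (iv), so one never needs to argue separately that $\sigma_{X_n}^2$ is bounded. Your route is fine too, but the justification you offer for bounding $\sigma_{X_n}^2$ (``$\mathbb{V}{\rm ar}(Y_n-\beta X_n)\ge 0$ forces comparability'') is not the right reason; the honest argument is simply that $\sigma_{Y_n}/\sigma_{X_n}\to\beta$ together with (iv) gives $\sigma_{X_n}$ bounded provided $\beta>0$, and similarly for $\E X_n$. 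Both proofs tacitly use $\beta>0$ at the mean step $(\alpha_n\E X_n - \E Y_n)\to 0$; in the intended application $\beta=1/\mu>0$, so this is harmless.
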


	\begin{proof}
		Define $\a_n :=  {C_n \ \sigma_{Y_n}}/{\sigma_{X_n}}$. Then by using (i) and (ii),  $\a_n \to \beta$. In addition,{
		\begin{align}\label{Var_alpha_eq}
		\mathbb{V}{\rm ar}(\a_n X_n - Y_n) &= \a_n^2 \sigma^2_{X_n} - 2 \a_n C_n  \sigma_{X_n} \sigma_{Y_n} +\sigma^2_{Y_n} \nonumber \\
		&=C^2_n \sigma^2_{Y_n} - 2C_n^2 \sigma^2_{Y_n} + \sigma^2_{Y_n} 
		 = (1-C_n^2) \sigma^2_{Y_n} \to 0,
		\end{align}}
		using (ii) and (iv). Then note that 
		\[
		\E (\a_n X_n - Y_n)^2 = \mathbb{V}{\rm ar}(\a_n X_n - Y_n) + (\a_n \E X_n - \E Y_n)^2\to 0;
		\]
		the first term goes to 0 due to  \eqref{Var_alpha_eq}, and the second term due to  
		(i), (iv), and 
		$\a_n \to \b$ as $n \to \infty$. Hence  $\a_n X_n - Y_n \to 0$ in ${L^2}$; using Chebyshev's inequality, it immediately follows that $\e_n :=  \a_n X_n - Y_n \stackrel{P}{\to} 0 $. 
		
		Now consider $Y_n = \a_n X_n - \e_n.$  Using (a) 	$\a_n \to \b$ as $n \to \infty$, (b) $X_n \stackrel{d}{\to} X$ because of (iii),
			(c) $\e_n \stackrel{P}\to 0$,
			(d) Slutsky's Lemma \cite[Lemma 2.8]{vdvaart}: $A_n \stackrel{d}{\to} A, B_n \stackrel{P}{\to} 0$ implies that $A_n + B_n \stackrel{d}{\to} A$ (irrespective of $A_n$ and $B_n$ being dependent),
				the claim $Y_n\stackrel{d}{\to} \beta X$  follows.
	\end{proof}
This lemma, in combination with Theorem~\ref{Theo:HT_LT}, now yields the following heavy-traffic result for $N$, the number of customers in the $\textrm{Hawkes}/M/\infty$ queue.
For this purpose, we take $X_n = (1 - \rho_n) \Lambda_n$ and $Y_n = (1 - \rho_n) N_n$, where $\rho_n$, $n=1,2,\dots$ is a sequence of parameters converging to $1$,
and where $\Lambda_n$ and $N_n$ correspond to the quantities $\Lambda$ and $N$ in our $\textrm{Hawkes}/M/\infty$ queue when $\rho=\rho_n$.
\begin{theorem}\label{Theo:Heavy_traffic_lighttailN}
Consider the $\textrm{Hawkes}/M/\infty$ queue, with ${\mathscr J }( u) =  e^{-\m u}$.
Assume that the first two moments of $B$ are finite, and that $h(u) = {\rm e}^{-ru}$, $u \geqslant 0$.
As ${\rho \uparrow 1}$,
	\begin{equation}
	(1 - \rho) N \stackrel{d}{\to} {\Gamma}\left(\f{2 r \l_{\infty} }{b_2}, \f{2r \m}{b_2}\right).
	\end{equation}				
\end{theorem}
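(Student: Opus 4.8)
The plan is to derive Theorem~\ref{Theo:Heavy_traffic_lighttailN} from the Lemma stated just above, applied with $X_n := (1-\rho_n)\Lambda_n$, $Y_n := (1-\rho_n)N_n$ and multiplier $\beta := 1/\mu$, reading each of the four hypotheses off the closed-form stationary first and second moments computed in Section~\ref{sec:stationarymoments}, and using Theorem~\ref{Theo:HT_LT} for the limit law of $X_n$. Throughout I would use that, since $h(u)=e^{-ru}$, we have $\rho = b_1/r$ and hence $r_0 = r-b_1 = r(1-\rho)$, so that $r_0 \downarrow 0$ precisely as $\rho_n \uparrow 1$; substituting $r_0 = r(1-\rho_n)$ into the stationary-moment formulas is what makes the verification explicit.

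First I would check hypothesis (i). Inserting $r_0 = r(1-\rho_n)$ into the stationary means gives $(1-\rho_n)\,\E\Lambda_n = \lambda_\infty$ and $(1-\rho_n)\,\E N_n = \lambda_\infty/\mu$ for every $n$, so $\E Y_n/\E X_n = 1/\mu$ holds identically. For the standard deviations, the same substitution gives $(1-\rho_n)^2\,\mathbb{V}{\rm ar}(\Lambda_n) = \lambda_\infty b_2/(2r)$, independent of $n$, while
\[
(1-\rho_n)^2\,\mathbb{V}{\rm ar}(N_n) = \frac{\lambda_\infty}{2r}\cdot\frac{b_2 + 2(\mu+r)r_0}{\mu(\mu+r_0)} \longrightarrow \frac{\lambda_\infty b_2}{2 r \mu^2},
\]
since the last fraction tends to $b_2/\mu^2$ as $r_0\downarrow 0$. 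Hence $\sigma_{Y_n}/\sigma_{X_n}\to 1/\mu = \beta$, which is hypothesis (i); this is exactly the observation, mentioned in the paragraph preceding the theorem, that the first two moments of $(1-\rho)\Lambda$ and $(1-\rho)\mu N$ have the same limit.

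For hypothesis (ii), note that the factor $(1-\rho_n)^2$ cancels in the ratio defining $C_n$, so $C_n$ equals the stationary correlation coefficient
\[
\rho(N_n,\Lambda_n) = \frac{b_2 + 2 r_0 b_1}{\sqrt{b_2\,(b_2 + 2 r_0(\mu+r))}}\,\sqrt{\frac{\mu}{\mu+r_0}},
\]
which tends to $1$ as $r_0\downarrow 0$. Hypothesis (iii) is precisely Theorem~\ref{Theo:HT_LT}: $X_n \stackrel{d}{\to} X$ with $X\sim\Gamma\!\big(2r\lambda_\infty/b_2,\,2r/b_2\big)$. Hypothesis (iv) is immediate, since $\E Y_n \equiv \lambda_\infty/\mu$ and $\sigma^2_{Y_n} = (1-\rho_n)^2\,\mathbb{V}{\rm ar}(N_n)$ converges, hence is bounded. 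The Lemma then yields $(1-\rho_n)N_n \stackrel{d}{\to}\beta X = X/\mu$, and the elementary scaling identity $\Gamma(\alpha_1,\alpha_2)/\mu = \Gamma(\alpha_1,\alpha_2\mu)$ for a rate-parametrised Gamma turns this into the claimed limit $\Gamma\!\big(2r\lambda_\infty/b_2,\,2r\mu/b_2\big)$.

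The only genuinely delicate point is hypothesis (i): both $\mathbb{V}{\rm ar}(\Lambda_n)$ and $\mathbb{V}{\rm ar}(N_n)$ diverge at the rate $(1-\rho_n)^{-2}$, so one must extract and compare their leading coefficients carefully rather than naively cancel the blow-ups; this is exactly what the explicit second-moment expressions of Section~\ref{sec:stationarymoments} make possible, and everything else in the argument is either bookkeeping or a direct citation of a result already in hand.
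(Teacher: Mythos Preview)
Your proposal is correct and follows essentially the same approach as the paper: applying the Lemma with $X_n=(1-\rho_n)\Lambda_n$, $Y_n=(1-\rho_n)N_n$, $\beta=1/\mu$, invoking Theorem~\ref{Theo:HT_LT} for hypothesis (iii), and reading off hypotheses (i), (ii), (iv) from the explicit stationary moments of Section~\ref{sec:stationarymoments}. The paper's own proof is in fact terser than yours --- it simply identifies $X_n$ and $Y_n$ and leaves the verification of the Lemma's hypotheses implicit --- so your write-up spells out precisely the details the paper omits.
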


\section{Discussion and concluding remarks}
\label{sec:Discussion}
In this paper we have analyzed an infinite-server queue fed by a Hawkes arrival process. Under Markovian assumptions, a fairly explicit analysis is possible, leading to e.g.\ explicit expressions for the (transient and stationary) moments of the number of customers in the system. Lifting the Markovian assumptions, the analysis becomes less explicit: results are derived in terms of  a fixed-point equation describing the $z$-transform of the number of customers. We have used this fixed-point equation to derive asymptotic results. 

Several branches of follow-up research offer {themselves}.
\begin{itemize}
\item[$\circ$]
In the first place one could consider {\it single-server} queues with Hawkes input. In our analysis in the infinite-server setting we repeatedly use that the customers are served independently of each other, a property that we do not have in a single-server context. This may entail that exact analysis is prohibitively difficult, but analysis in a heavy-traffic setting might be possible.
\item[$\circ$]
In \cite{Koops2017} we considered networks of infinite-server queues with shot-noise-driven input. There it turned out that the network setting could be analyzed by essentially the same techniques as the single-queue setting. This raises the question whether the results of the present paper also naturally extend to that of a {\em network} of infinite-server queues with Hawkes input. We would have to appeal to the multivariate counterpart of self-exciting processes, which are called \textit{mutually-exciting} arrival processes \cite{mutuallyexciting}. As the name suggests, in this case arrivals to a particular queue are able to excite arrivals in other queues.
\item[$\circ$] { There is a vast body of literature that deals with statistical inference for Hawkes processes, which implies that Hawkes processes can be fitted to real data. Most extensive practical applications in the literature are related to limit order book data, cf.\ e.g.\ \cite{Bacry, Toke,Blanchet2017}. In case of Markovian Hawkes processes, the likelihood function has an explicit expression \cite{OgataAkaike1982}, and the resulting maximum likelihood estimators are known to be consistent, asymptotically normal and efficient \cite{Ogata1978}. There are also several more recent papers that consider nonparametric estimation of non-Markovian Hawkes processes, e.g.\ \cite{Kirchner}. For a concise survey on statistical literature on Hawkes processes, see \cite[Appendix C]{Bacry}. The paper \cite{Blanchet2017} empirically supports the use of infinite-server queues in limit order book models. An interesting line of further research is to verify if predictions derived from the model in this paper correspond to empirical order book data as well.}
\item[$\circ$]One could pursue improving the bounds of Prop.\ \ref{prop:upperlower}, to make them more useful for larger values of $n$. Also better initial approximations can be determined (see Remark \ref{rem:alg}).
\item[$\circ$]
In the heavy-traffic setting, there are several interesting directions still to be explored. One could try to generalize Theorem~\ref{Theo:Heavy_traffic_lighttailN} to the case of generally distributed $J$;
it would also be interesting to study the heavy-traffic behavior of $\Lambda$ 
and $N$ in the heavy-tailed setting of Section \ref{sec5.1}.
\end{itemize}

\subsubsection*{Acknowledgments}
{\footnotesize After the submission of the ArXiv version of this paper (ref.\ 1707.02196; July 7th, 2017),  J.\ Pender (Cornell University) kindly informed us
that A.\ Daw and he were also completing a paper \cite{Pender} on infinite-server queues with Hawkes arrival processes. We thank P.\ Spreij (University of Amsterdam) for useful discussions.}

{\footnotesize The research for this paper is partly funded by the NWO Gravitation Project NETWORKS, Grant Number 024.002.003 (Boxma, Koops, Mandjes) and an NWO Top Grant, Grant Number  613.001.352 (Boxma, Mandjes, Saxena). The research of O.\ Boxma was also partly funded by the Belgian Government, via the IAP Bestcom Project.}
 
{\footnotesize
\bibliographystyle{acm}
\bibliography{biblio}}
\end{document}